\newtheorem{theorem}{Theorem}[section]
\newtheorem{lemma}[theorem]{Lemma}
\theoremstyle{definition}
\newtheorem{definition}[theorem]{Definition}
\newtheorem{example}[theorem]{Example}
\theoremstyle{remark}
\newtheorem{remark}[theorem]{Remark}
\numberwithin{equation}{section}
\theoremstyle{plain}
\newtheorem{axiom}{Axiom}
\newtheorem{conjecture}{Conjecture}
\newtheorem{corollary}{Corollary}
\newtheorem{exercise}{Exercise}
\newtheorem{proposition}{Proposition}
\chardef\@x10\chardef\@xv60
\def\tcitime{
\def\@time{%
  \@minute\time\@hour\@minute\divide\@hour\@xv
  \ifnum\@hour<\@x 0\fi\the\@hour:%
  \multiply\@hour\@xv\advance\@minute-\@hour
  \ifnum\@minute<\@x 0\fi\the\@minute
  }}%
\def\x@hyperref#1#2#3{%
   \catcode`\~ = 12
   \catcode`\$ = 12
   \catcode`\_ = 12
   \catcode`\# = 12
   \catcode`\& = 12
   \y@hyperref{#1}{#2}{#3}%
}
\def\y@hyperref#1#2#3#4{%
   #2\ref{#4}#3
   \catcode`\~ = 13
   \catcode`\$ = 3
   \catcode`\_ = 8
   \catcode`\# = 6
   \catcode`\& = 4
}
\def\QCTOpt[#1]#2{%
  \def\QCTOptB{#1}
  \def\QCTOptA{#2}
}
\def\QCTNOpt#1{%
  \def\QCTOptA{#1}
  \let\QCTOptB\empty
}
\def\Qct{%
  \@ifnextchar[{%
    \QCTOpt}{\QCTNOpt}
}
\def\QCBOpt[#1]#2{%
  \def\QCBOptB{#1}%
  \def\QCBOptA{#2}%
}
\def\QCBNOpt#1{%
  \def\QCBOptA{#1}%
  \let\QCBOptB\empty
}
\def\Qcb{%
  \@ifnextchar[{%
    \QCBOpt}{\QCBNOpt}%
}
\def\PrepCapArgs{%
  \ifx\QCBOptA\empty
    \ifx\QCTOptA\empty
      {}%
    \else
      \ifx\QCTOptB\empty
        {\QCTOptA}%
      \else
        [\QCTOptB]{\QCTOptA}%
      \fi
    \fi
  \else
    \ifx\QCBOptA\empty
      {}%
    \else
      \ifx\QCBOptB\empty
        {\QCBOptA}%
      \else
        [\QCBOptB]{\QCBOptA}%
      \fi
    \fi
  \fi
}
\def\GRAPHICSPS#1{%
 \ifcase\GRAPHICSTYPE
   \special{ps: #1}%
 \or
   \special{language "PS", include "#1"}%
 \fi
}%
\def\graffile#1#2#3#4{%
    \bgroup
	   \@inlabelfalse
       \leavevmode
       \@ifundefined{bbl@deactivate}{\def~{\string~}}{\activesoff}%
        \raise -#4 \BOXTHEFRAME{%
           \hbox to #2{\raise #3\hbox to #2{\null #1\hfil}}}%
    \egroup
}%
\def\draftbox#1#2#3#4{%
 \leavevmode\raise -#4 \hbox{%
  \frame{\rlap{\protect\tiny #1}\hbox to #2%
   {\vrule height#3 width\z@ depth\z@\hfil}%
  }%
 }%
}%
\let\nographics=\@msidraft
\newif\ifwasdraft
\def\GRAPHIC#1#2#3#4#5{%
   \ifnum\@msidraft=\@ne\draftbox{#2}{#3}{#4}{#5}%
   \else\graffile{#1}{#3}{#4}{#5}%
   \fi
}
\def\addtoLaTeXparams#1{%
    \edef\LaTeXparams{\LaTeXparams #1}}%
\newif\ifBoxFrame \BoxFramefalse
\newif\ifOverFrame \OverFramefalse
\newif\ifUnderFrame \UnderFramefalse
\def\BOXTHEFRAME#1{%
   \hbox{%
      \ifBoxFrame
         \frame{#1}%
      \else
         {#1}%
      \fi
   }%
}
\def\doFRAMEparams#1{\BoxFramefalse\OverFramefalse\UnderFramefalse\readFRAMEparams#1\end}%
\def\readFRAMEparams#1{%
 \ifx#1\end%
  \let\next=\relax
  \else
  \ifx#1i\dispkind=\z@\fi
  \ifx#1d\dispkind=\@ne\fi
  \ifx#1f\dispkind=\tw@\fi
  \ifx#1t\addtoLaTeXparams{t}\fi
  \ifx#1b\addtoLaTeXparams{b}\fi
  \ifx#1p\addtoLaTeXparams{p}\fi
  \ifx#1h\addtoLaTeXparams{h}\fi
  \ifx#1X\BoxFrametrue\fi
  \ifx#1O\OverFrametrue\fi
  \ifx#1U\UnderFrametrue\fi
  \ifx#1w
    \ifnum\@msidraft=1\wasdrafttrue\else\wasdraftfalse\fi
    \@msidraft=\@ne
  \fi
  \let\next=\readFRAMEparams
  \fi
 \next
 }%
\def\IFRAME#1#2#3#4#5#6{%
      \bgroup
      \let\QCTOptA\empty
      \let\QCTOptB\empty
      \let\QCBOptA\empty
      \let\QCBOptB\empty
      #6%
      \parindent=0pt
      \leftskip=0pt
      \rightskip=0pt
      \setbox0=\hbox{\QCBOptA}%
      \@tempdima=#1\relax
      \ifOverFrame
          \typeout{This is not implemented yet}%
          \show\HELP
      \else
         \ifdim\wd0>\@tempdima
            \advance\@tempdima by \@tempdima
            \ifdim\wd0 >\@tempdima
               \setbox1 =\vbox{%
                  \unskip\hbox to \@tempdima{\hfill\GRAPHIC{#5}{#4}{#1}{#2}{#3}\hfill}%
                  \unskip\hbox to \@tempdima{\parbox[b]{\@tempdima}{\QCBOptA}}%
               }%
               \wd1=\@tempdima
            \else
               \textwidth=\wd0
               \setbox1 =\vbox{%
                 \noindent\hbox to \wd0{\hfill\GRAPHIC{#5}{#4}{#1}{#2}{#3}\hfill}\\%
                 \noindent\hbox{\QCBOptA}%
               }%
               \wd1=\wd0
            \fi
         \else
            \ifdim\wd0>0pt
              \hsize=\@tempdima
              \setbox1=\vbox{%
                \unskip\GRAPHIC{#5}{#4}{#1}{#2}{0pt}%
                \break
                \unskip\hbox to \@tempdima{\hfill \QCBOptA\hfill}%
              }%
              \wd1=\@tempdima
           \else
              \hsize=\@tempdima
              \setbox1=\vbox{%
                \unskip\GRAPHIC{#5}{#4}{#1}{#2}{0pt}%
              }%
              \wd1=\@tempdima
           \fi
         \fi
         \@tempdimb=\ht1
         \advance\@tempdimb by -#2
         \advance\@tempdimb by #3
         \leavevmode
         \raise -\@tempdimb \hbox{\box1}%
      \fi
      \egroup%
}%
\def\DFRAME#1#2#3#4#5{%
  \hfil\break
  \bgroup
     \leftskip\@flushglue
	 \rightskip\@flushglue
	 \parindent\z@
	 \parfillskip\z@skip
     \let\QCTOptA\empty
     \let\QCTOptB\empty
     \let\QCBOptA\empty
     \let\QCBOptB\empty
	 \vbox\bgroup
        \ifOverFrame 
           #5\QCTOptA\par
        \fi
        \GRAPHIC{#4}{#3}{#1}{#2}{\z@}%
        \ifUnderFrame 
           \break#5\QCBOptA
        \fi
	 \egroup
   \egroup
   \break
}%
\def\FFRAME#1#2#3#4#5#6#7{%
  \@ifundefined{floatstyle}
    {
     \begin{figure}[#1]%
    }
    {
	 \ifx#1h
      \begin{figure}[H]%
	 \else
      \begin{figure}[#1]%
	 \fi
	}
  \let\QCTOptA\empty
  \let\QCTOptB\empty
  \let\QCBOptA\empty
  \let\QCBOptB\empty
  \ifOverFrame
    #4
    \ifx\QCTOptA\empty
    \else
      \ifx\QCTOptB\empty
        \caption{\QCTOptA}%
      \else
        \caption[\QCTOptB]{\QCTOptA}%
      \fi
    \fi
    \ifUnderFrame\else
      \label{#5}%
    \fi
  \else
    \UnderFrametrue%
  \fi
  \begin{center}\GRAPHIC{#7}{#6}{#2}{#3}{\z@}\end{center}%
  \ifUnderFrame
    #4
    \ifx\QCBOptA\empty
      \caption{}%
    \else
      \ifx\QCBOptB\empty
        \caption{\QCBOptA}%
      \else
        \caption[\QCBOptB]{\QCBOptA}%
      \fi
    \fi
    \label{#5}%
  \fi
  \end{figure}%
 }%
\def\makeactives{
  \catcode`\"=\active
  \catcode`\;=\active
  \catcode`\:=\active
  \catcode`\'=\active
  \catcode`\~=\active
}
   \gdef\activesoff{%
      \def"{\string"}%
      \def;{\string;}%
      \def:{\string:}%
      \def'{\string'}%
      \def~{\string~}%
    }
\def\FRAME#1#2#3#4#5#6#7#8{%
 \bgroup
 \ifnum\@msidraft=\@ne
   \wasdrafttrue
 \else
   \wasdraftfalse%
 \fi
 \def\LaTeXparams{}%
 \dispkind=\z@
 \def\LaTeXparams{}%
 \doFRAMEparams{#1}%
 \ifnum\dispkind=\z@\IFRAME{#2}{#3}{#4}{#7}{#8}{#5}\else
  \ifnum\dispkind=\@ne\DFRAME{#2}{#3}{#7}{#8}{#5}\else
   \ifnum\dispkind=\tw@
    \edef\@tempa{\noexpand\FFRAME{\LaTeXparams}}%
    \@tempa{#2}{#3}{#5}{#6}{#7}{#8}%
    \fi
   \fi
  \fi
  \ifwasdraft\@msidraft=1\else\@msidraft=0\fi{}%
  \egroup
 }%
\def\TEXUX#1{"texux"}
\long\def\QQQ#1#2{%
     \long\expandafter\def\csname#1\endcsname{#2}}%
\long\def\QQA#1#2{}%
\def\QTR#1#2{{\csname#1\endcsname {#2}}}%
\def\EXPAND#1[#2]#3{}%
\def\NOEXPAND#1[#2]#3{}%
\def\LaTeXparent#1{}%
\def\ChildStyles#1{}%
\def\ChildDefaults#1{}%
\def\QTagDef#1#2#3{}%
  \providecommand{\UNICODE}[2][]{\protect\rule{.1in}{.1in}}
  \providecommand{\U}[1]{\protect\rule{.1in}{.1in}}
\def\QQfnmark#1{\footnotemark}
 \def\abstract{%
  \if@twocolumn
   \section*{Abstract (Not appropriate in this style!)}%
   \else \small 
   \begin{center}{\bf Abstract\vspace{-.5em}\vspace{\z@}}\end{center}%
   \quotation 
   \fi
  }%
   \def\registered{\relax\ifmmode{}\r@gistered
                    \else$\m@th\r@gistered$\fi}%
 \def\r@gistered{^{\ooalign
  {\hfil\raise.07ex\hbox{$\scriptstyle\rm\text{R}$}\hfil\crcr
  \mathhexbox20D}}}}{}%
\newdimen\theight
\def\newfmtname{LaTeX2e}
  \DeclareOldFontCommand{\rm}{\normalfont\rmfamily}{\mathrm}
  \DeclareOldFontCommand{\sf}{\normalfont\sffamily}{\mathsf}
  \DeclareOldFontCommand{\tt}{\normalfont\ttfamily}{\mathtt}
  \DeclareOldFontCommand{\bf}{\normalfont\bfseries}{\mathbf}
  \DeclareOldFontCommand{\it}{\normalfont\itshape}{\mathit}
  \DeclareOldFontCommand{\sl}{\normalfont\slshape}{\@nomath\sl}
  \DeclareOldFontCommand{\sc}{\normalfont\scshape}{\@nomath\sc}
\def\alpha{{\Greekmath 010B}}%
\def\beta{{\Greekmath 010C}}%
\def\gamma{{\Greekmath 010D}}%
\def\delta{{\Greekmath 010E}}%
\def\epsilon{{\Greekmath 010F}}%
\def\zeta{{\Greekmath 0110}}%
\def\eta{{\Greekmath 0111}}%
\def\theta{{\Greekmath 0112}}%
\def\iota{{\Greekmath 0113}}%
\def\kappa{{\Greekmath 0114}}%
\def\lambda{{\Greekmath 0115}}%
\def\mu{{\Greekmath 0116}}%
\def\nu{{\Greekmath 0117}}%
\def\xi{{\Greekmath 0118}}%
\def\pi{{\Greekmath 0119}}%
\def\rho{{\Greekmath 011A}}%
\def\sigma{{\Greekmath 011B}}%
\def\tau{{\Greekmath 011C}}%
\def\upsilon{{\Greekmath 011D}}%
\def\phi{{\Greekmath 011E}}%
\def\chi{{\Greekmath 011F}}%
\def\psi{{\Greekmath 0120}}%
\def\omega{{\Greekmath 0121}}%
\def\varepsilon{{\Greekmath 0122}}%
\def\vartheta{{\Greekmath 0123}}%
\def\varpi{{\Greekmath 0124}}%
\def\varrho{{\Greekmath 0125}}%
\def\varsigma{{\Greekmath 0126}}%
\def\varphi{{\Greekmath 0127}}%
\def\nabla{{\Greekmath 0272}}
\def\FindBoldGroup{%
   {\setbox0=\hbox{$\mathbf{x\global\edef\theboldgroup{\the\mathgroup}}$}}%
}
\def\Greekmath#1#2#3#4{%
    \if@compatibility
        \ifnum\mathgroup=\symbold
           \mathchoice{\mbox{\boldmath$\displaystyle\mathchar"#1#2#3#4$}}%
                      {\mbox{\boldmath$\textstyle\mathchar"#1#2#3#4$}}%
                      {\mbox{\boldmath$\scriptstyle\mathchar"#1#2#3#4$}}%
                      {\mbox{\boldmath$\scriptscriptstyle\mathchar"#1#2#3#4$}}%
        \else
           \mathchar"#1#2#3#4%
        \fi 
    \else 
        \FindBoldGroup
        \ifnum\mathgroup=\theboldgroup 
           \mathchoice{\mbox{\boldmath$\displaystyle\mathchar"#1#2#3#4$}}%
                      {\mbox{\boldmath$\textstyle\mathchar"#1#2#3#4$}}%
                      {\mbox{\boldmath$\scriptstyle\mathchar"#1#2#3#4$}}%
                      {\mbox{\boldmath$\scriptscriptstyle\mathchar"#1#2#3#4$}}%
        \else
           \mathchar"#1#2#3#4%
        \fi     	    
	  \fi}
\newif\ifGreekBold  \GreekBoldfalse
\let\SAVEPBF=\pbf
\def\pbf{\GreekBoldtrue\SAVEPBF}%
  \newcounter{equationnumber}  
  \def\mathletters{%
     \addtocounter{equation}{1}
     \edef\@currentlabel{\theequation}%
     \setcounter{equationnumber}{\c@equation}
     \setcounter{equation}{0}%
     \edef\theequation{\@currentlabel\noexpand\alph{equation}}%
  }
    \def\BibTeX{{\rm B\kern-.05em{\sc i\kern-.025em b}\kern-.08em
                 T\kern-.1667em\lower.7ex\hbox{E}\kern-.125emX}}}{}%
\def\AmS{{\protect\usefont{OMS}{cmsy}{m}{n}%
                A\kern-.1667em\lower.5ex\hbox{M}\kern-.125emS}}}{}%
\def\@@eqncr{\let\@tempa\relax
    \ifcase\@eqcnt \def\@tempa{& & &}\or \def\@tempa{& &}%
      \else \def\@tempa{&}\fi
     \@tempa
     \if@eqnsw
        \iftag@
           \@taggnum
        \else
           \@eqnnum\stepcounter{equation}%
        \fi
     \fi
     \global\tag@false
     \global\@eqnswtrue
     \global\@eqcnt\z@\cr}
\def\TCItag{\@ifnextchar*{\@TCItagstar}{\@TCItag}}
\def\@TCItag#1{%
    \global\tag@true
    \global\def\@taggnum{(#1)}}
\def\@TCItagstar*#1{%
    \global\tag@true
    \global\def\@taggnum{#1}}
\begin{document}
\title[$v$-domains]{On super $v$-domains}
\author{M. Zafrullah}
\address{Department of Mathematics, Idaho State University,\\
Pocatello, Idaho, USA}
\email{mzafrullah@usa.net}
\urladdr{}
\thanks{}
\subjclass[2020]{Primary 13F05, 13G05; Secondary 13B25, 13B30}
\keywords{Super $v$-domain, P-domain, locally $v$-domain, splitting set, $t$%
-splitting set}

\begin{abstract}
An integral domain $D,$ with quotient field $K,$ is a $v$-domain if for each
nonzero finitely generated ideal $A$ of $D$ we have $(AA^{-1})^{-1}=D.$ It
is well known that if $D$ is a $v$-domain$,$ then some quotient ring $D_{S}$
of $D$ may not be a $v$-domain. Calling $D$ a super $v$-domain if every
quotient ring of $D$ is a $v$-domain we characterize super $v$-domains as
locally $v$-domains. Using techniques from factorization theory we show that 
$D$ is a super $v$-domain if and only if $D[X]$ is a super $v$-domain if and
only if $D+XK[X]$ is a super $v$-domain and give new examples of super $v$%
-domains that are strictly between $v$-domains and P-domains that were
studied in [Manuscripta Math. 35(1981)1-26]
\end{abstract}

\maketitle

An integral domain $D,$ with quotient field $K,$ is called a $v$-domain if
for every finitely generated nonzero ideal $A$ of $D,$ $A$ is $v$%
-invertible, i.e., we have $(AA^{-1})^{-1}=D$ or equivalently $%
(AA^{-1})_{v}=D.$ Now $v$-domains, the oldest known notion in,
multiplicative ideal theory, according to \cite{FZ v}, come defined in
various ways. They are called $\ast $-Prufer if for some star operation,
every finitely generated nonzero ideal $A$ is $\ast $-invertible, i.e., we
have $(AA^{-1})^{\ast }=D$ (see, e.g., \cite{AAFZ v}). As is apparent from
the above definitions, $v$-domains are modeled after Prufer domains.
Mimicking the proof, for Prufer domain, by Prufer himself, it was shown in 
\cite{MNZ} that $D$ is a $v$-domain if and only if every two generated
nonzero ideal of $D$ is $v$-invertible (see also an earlier paper by Gabelli 
\cite{Gab} that hints at the possibility.) Call $D$ essential if $D$ has a
family $\mathcal{F}$ of prime ideals such that $D_{P}$ is a valuation domain
for each $P\in \mathcal{F}$ and $D=\cap _{P\in \mathcal{F}}D_{P}.$ As
indicated in \cite{FZ v} an essential domain is a $v$-domain and so is the
so-called "P-domain". A P-domain here is an essential domain, each of whose
quotient rings is essential. The P-domains were initially studied in \cite%
{MZ}. It turns out, however, that if $D$ is a $v$-domain and $A$ a nonzero
finitely generated ideal of $D$, $A^{-1}$ may not even be close to being
finitely generated, completely unlike Prufer domains. Also, every quotient
ring of a Prufer domain is Prufer. Yet using an example of Heinzer's, \cite%
{He}, of an essential domain with a non-essential quotient ring that cannot
be a $v$-domain, one can show that if $D$ is a $v$-domain and $S$ a
multiplicative set of $D,$ then $D_{S}$ need not be a $v$-domain, see
section 3 of \cite{FZ v} for a discussion on this example. This raises the
questions: (a) if $D$ is a $v$-domain, under what conditions on a
multiplicative set $S,$ or on $D,$ can we be sure that $D_{S}$ is a $v$%
-domain? (b) what are the $v$-domains whose quotient rings are also $v$%
-domains and that are not any of the known examples of $v$-domains all of
whose quotient rings are $v$-domains and (c) if every proper quotient ring
of $D$ is a $v$-domain, must $D$ be a $v$-domain? The purpose of this note
is to start a discussion on these questions. For our part we characterize
super $v$-domains i.e. domains whose quotient rings are all $v$-domains and
discuss some conditions that will ensure that a quotient ring of a $v$%
-domain is a $v$-domain. We show for instance that if $D$ is a $v$-domain
and $S$ is a splitting or a $t$-splitting set of $D$ then $D_{S}$ is a $v$%
-domain. Using some of these results we give an example schema for super $v$%
-domains, showing that these super $v$-domains are strictly between $v$%
-domains and P-domains, we show that $D$ is a super $v$-domain if and only
if $D+XK[X]$ is a super $v$-domain. We also show that if $X$ is an
indeterminate over $D$, then $D$ is a super $v$-domain if and only if $D[X]$
is. (The answer to question (c) is that for a one dimensional quasi local
domain $D$ a proper quotient ring is the field of fractions of $D$ and hence
a $v$-domain. But a one dimensional quasi local domain need not be a $v$%
-domain.)

It seems pertinent to let the reader in on the terminology that we have used
above and that we are going to use when we prove our results. Let $D$ be an
integral domain with quotient field $K$ and let $F(D)$ be the set of nonzero
fractional ideals of $D.$ A star operation is a function $A\mapsto A^{\ast }$
on $F(D)$ with the following properties:

If $A,B\in F(D)$ and $a\in K\backslash \{0\},$ then

(i) $(a)^{\ast }=(a)$ and $(aA)^{\ast }=aA^{\ast }.$

(ii) $A\subseteq A^{\ast }$ and if $A\subseteq B,$ then $A^{\ast }\subseteq
B^{\ast }.$

(iii) $(A^{\ast })^{\ast }=A^{\ast }.$

We may call $A^{\ast }$ the $\ast $-image ( or $\ast $-envelope ) of $A.$ An
ideal $A$ is said to be a $\ast $\textit{-ideal} if $A^{\ast }=A.$ Thus $%
A^{\ast }$ is a $\ast $-ideal (by (iii)). Moreover (by (i)) every principal
fractional ideal, including $D=(1)$, is a $\ast $- ideal for any star
operation $\ast $.

For all $A,B\in F(D)$ and for each star operation $\ast $, $(AB)^{\ast
}=(A^{\ast }B)^{\ast }=(A^{\ast }B^{\ast })^{\ast }$. These equations define
what is called $\ast $-multiplication \textit{( or }$\ast $-product)\textit{%
. }

Define $A_{v}=(A^{-1})^{-1}$ and $A_{t}=\bigcup \{J_{v}|$ $0\neq J$ is a
finitely generated subideal of $A\}.$ The functions $A\mapsto A_{v}$ and $%
A\mapsto A_{t}$ on $F(D)$ are more familiar examples of star operations
defined on an integral domain. A $v$-ideal is better known as a divisorial
ideal. The identity function $d$ on $F(D)$, defined by $A\mapsto A$ is
another example of a star operation. There are of course many more star
operations that can be defined on an integral domain $D$. But for any star
operation $\ast $ and for any $A\in F(D),$ $A^{\ast }\subseteq A_{v}.$ Some
other useful relations are: For any $A\in F(D),$ $(A^{-1})^{\ast }=$ $%
A^{-1}=(A^{\ast })^{-1}$ and so, $(A_{v})^{\ast }=A_{v}=(A^{\ast })_{v}.$
Using the definition of the $t$-operation one can show that an ideal that is
maximal w.r.t. being a proper integral $t$-ideal is a prime ideal of $D$,
each ideal $A$ of $D$ with $A_{t}\neq D$ is contained in a maximal $t$-ideal
of $D$ and $D=\cap D_{M},$ where $M$ ranges over maximal $t$-ideals of $D.$
For more on $v$- and $t$-operations the reader may consult sections 32 and
34 of Gilmer \cite{Gil}. Our terminology essentially comes from \cite{Gil}.

Call a multiplicative set $S$ of $D$ a splitting set if $S$ is saturated and
for each $d\in D\backslash \{0\}$ we can write $d=d^{\prime }s$ where $s\in
S $ and $d^{\prime }\in D$ such that $(d^{\prime },t)_{v}=D$ for all $t\in
S. $ For more on splitting sets look up \cite{AAZ spl}. On the other hand a
multiplicative set $S$ of $D$ is a $t$-splitting set if for all $d\in
D\backslash \{0\}$ we can write $dD=(AB)_{t}$ where $B_{t}\cap S\neq \phi $
and $(A,s)_{v}=D$ for all $s\in S.$ The $t$-splitting sets were introduced
and applied in \cite{AAZ t-spl}.

Let's call $D$ a super $v$-domain if every quotient ring of $D$ is a $v$%
-domain. Let us be clear about what we are looking for, when we study "super 
$v$-domains" as there do exist super $v$-domains in the form of the
P-domains and Prufer domains and the so-called Prufer $v$-Multiplication
domains or PVMDs. PVMDs, by the way, are $v$-domains such that $aD\cap
bD=A_{v}$ for some finitely generated ideal $A,$ for all $a,b\in D\backslash
\{0\}$, \cite{MZ}. According to \cite{MZ} a PVMD is a P-domain. In our study
of super $v$-domains we are looking for $v$-domains $D$ that are not
P-domains yet have the property that $D_{S}$ is a $v$-domain for each
multiplicative set $S$ of $D.$ In other words we are looking for $v$-domains 
$D$ that lie strictly between $v$-domains and P-domains, with the property
that every quotient ring of $D$ is a $v$-domain.

The first thing that seems to prevent a $v$-domain from having a quotient
ring that is a $v$-domain seems to be that while for a nonzero finitely
generated ideal $I$ we have $(ID_{S})^{-1}=I^{-1}D_{S}$ we have no such
general formula for a nonzero ideal $I$. One way of dealing with a situation
like this is to bring in a new definition. Call a quotient ring $D_{S}$ of $%
D $ super extending if for each nonzero ideal $I$ of $D$ we have $%
(ID_{S})^{-1}=I^{-1}D_{S}.$ An immediate consequence is that if $D_{S}$ is
super extending, then $(ID_{S})_{v}=I_{v}D.$

\begin{lemma}
\label{Lemma A} If $D_{S}$ is super extending and $D$ is a $v$-domain, then $%
D_{S}$ is a $v$-domain.
\end{lemma}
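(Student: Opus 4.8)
The plan is to test $v$-invertibility directly: I must show that every nonzero finitely generated ideal $B$ of $D_{S}$ satisfies $(BB^{-1})_{v}=D_{S}$, where the inverse and the $v$-closure are taken in $D_{S}$, and then push this back to a statement about a finitely generated ideal of $D$, where the $v$-domain hypothesis is available.

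First I would observe that every nonzero finitely generated ideal $B$ of $D_{S}$ is extended from $D$. Writing its generators as $a_{i}/s_{i}$ with $a_{i}\in D$, $s_{i}\in S$ and discarding the units $s_{i}$, we get $B=AD_{S}$ where $A=(a_{1},\dots ,a_{n})$ is a nonzero finitely generated ideal of $D$. Since $A$ is finitely generated, $(AD_{S})^{-1}=A^{-1}D_{S}$, so $B^{-1}=A^{-1}D_{S}$ and therefore $BB^{-1}=(AA^{-1})D_{S}$. Put $I=AA^{-1}$; as $A$ is integral and finitely generated, $A^{-1}\supseteq D$ and $AA^{-1}\subseteq D$, so $I$ is a nonzero integral ideal of $D$ to which the super extending hypothesis applies.

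Next I would compute the $D_{S}$-closure by transporting it to $D$. Applying the super extending consequence recorded just before the lemma to the integral ideal $I$ gives $(BB^{-1})_{v}=(ID_{S})_{v}=I_{v}D_{S}=(AA^{-1})_{v}D_{S}$, the $v$ on the right being that of $D$. Because $D$ is a $v$-domain and $A$ is a nonzero finitely generated ideal, $(AA^{-1})_{v}=D$, so $(BB^{-1})_{v}=D_{S}$. As $B$ was an arbitrary nonzero finitely generated ideal of $D_{S}$, this proves that $D_{S}$ is a $v$-domain.

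The computation is short once the ingredients are in place, so the real content is bookkeeping: keeping straight that $B^{-1}$ and $(BB^{-1})_{v}$ are formed in $D_{S}$ while $A^{-1}$ and $(AA^{-1})_{v}$ are formed in $D$, and checking that the super extending consequence is genuinely being applied to the \emph{integral} ideal $I=AA^{-1}$ (which is why I verified $AA^{-1}\subseteq D$ above). The super extending hypothesis is precisely the bridge that lets the $D$-level identity $(AA^{-1})_{v}=D$ survive localization: one can extend $A^{-1}D_{S}$ correctly because $A$ is finitely generated, but without super extending there is no control over $(ID_{S})^{-1}$ for the possibly non-finitely-generated ideal $I=AA^{-1}$, which is exactly the obstruction flagged in the paragraph preceding the definition of super extending.
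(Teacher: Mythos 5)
Your proof is correct and is essentially the paper's own argument: both extend a finitely generated ideal $B=AD_{S}$ from $D$, use $(AD_{S})^{-1}=A^{-1}D_{S}$ to get $BB^{-1}=(AA^{-1})D_{S}$, and then apply the super extending hypothesis to the (possibly non-finitely generated) ideal $AA^{-1}$ together with $(AA^{-1})^{-1}=D$. The only cosmetic differences are that the paper verifies the condition for two-generated ideals (implicitly invoking the cited result that two-generated ideals suffice to test for a $v$-domain) and works directly with inverses rather than with the $v$-closure consequence you use.
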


\begin{proof}
Let $\alpha ,\beta \in D_{S}$. Then $\alpha =\frac{a}{s},\beta =\frac{b}{t}$
for some $a,b\in D$ and $s,t\in S$ and $(\alpha ,\beta )D_{S}((\alpha ,\beta
)D_{S})^{-1}=(a,b)D_{S}((a,b)D_{S})^{-1}=((a,b)(a,b)^{-1})D_{S}.$ Now as $%
D_{S}$ is super extending we conclude that $((\alpha ,\beta )D_{S}((\alpha
,\beta
)D_{S})^{-1})^{-1}=(((a,b)(a,b)^{-1})D_{S})^{-1}=(((a,b)(a,b)^{-1}))^{-1}D_{S}=D_{S} 
$ because in $D$ we have $(((a,b)(a,b)^{-1}))^{-1}=D.$
\end{proof}

But the drawback of Lemma \ref{Lemma A} is that if $D_{S}$ happens to be
such that $(a,b)^{-1}D_{S}$ is a finitely generated ideal of $D_{S}$ for
each pair $a,b$ of $D$, then Lemma \ref{Lemma A} would be an overkill.
Though $D_{S}$ would have to be a stronger form of a PVMD. All this beside,
super extending is too much even for our needs. So let's call $D_{S}$ simple
extending if $(((a,b)(a,b)^{-1})D_{S})^{-1}=(((a,b)(a,b)^{-1}))^{-1}D_{S}$.
We do seem to have disadvantages of super extending when working with simple
extending and simple extending is sort of too obvious a ploy, but it may
work in some interesting ways neatly.

\begin{proposition}
\label{Proposition B}Let $D$ be an integral domain and let $\{S_{\alpha }\}$
be a family of multiplicative sets of $D$ such that $D=\cap D_{S_{\alpha }}.$
If, for each $\alpha \in I,$ $D_{S_{\alpha }}$ is a simple extending
quotient ring of $D$ and a $v$-domain, then $D$ is a $v$-domain.
\end{proposition}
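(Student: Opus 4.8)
The plan is to reduce to two-generated ideals and then localize. By the Prüfer-style criterion of \cite{MNZ}, $D$ is a $v$-domain as soon as every nonzero two-generated ideal is $v$-invertible, so it suffices to fix nonzero $a,b\in D$, set $I=(a,b)(a,b)^{-1}$ (an integral ideal, since $(a,b)(a,b)^{-1}\subseteq D$), and prove $I^{-1}=D$. This turns the global claim into a statement about a single divisorial-type computation that we can test localization by localization.

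First I would work inside a fixed $D_{S_{\alpha}}$. Because $(a,b)$ is finitely generated, the extension formula $((a,b)D_{S_{\alpha}})^{-1}=(a,b)^{-1}D_{S_{\alpha}}$ applies, and hence $(a,b)D_{S_{\alpha}}\cdot((a,b)D_{S_{\alpha}})^{-1}=(a,b)(a,b)^{-1}D_{S_{\alpha}}=ID_{S_{\alpha}}$. Since $D_{S_{\alpha}}$ is a $v$-domain and $(a,b)D_{S_{\alpha}}$ is finitely generated, this finitely generated ideal is $v$-invertible in $D_{S_{\alpha}}$, which says precisely $(ID_{S_{\alpha}})^{-1}=D_{S_{\alpha}}$. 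At this point the simple-extending hypothesis enters: by definition it is exactly the identity $(ID_{S_{\alpha}})^{-1}=I^{-1}D_{S_{\alpha}}$, so combining the two gives $I^{-1}D_{S_{\alpha}}=D_{S_{\alpha}}$, and in particular $I^{-1}\subseteq D_{S_{\alpha}}$.

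Finally I would globalize. The containment $I^{-1}\subseteq D_{S_{\alpha}}$ holds for every $\alpha$, so $I^{-1}\subseteq\cap_{\alpha}D_{S_{\alpha}}=D$ by hypothesis. Conversely, $I\subseteq D$ forces $D=D^{-1}\subseteq I^{-1}$ (the inverse operation is inclusion-reversing, and every element of $D$ clearly multiplies $I\subseteq D$ back into $D$). Therefore $I^{-1}=D$, i.e. $((a,b)(a,b)^{-1})^{-1}=D$, so $(a,b)$ is $v$-invertible. As $a,b$ were arbitrary, \cite{MNZ} yields that $D$ is a $v$-domain.

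The argument is short, and its content is conceptual rather than computational: the simple-extending condition has been set up precisely so that the \emph{local} $v$-invertibility of $(a,b)D_{S_{\alpha}}$ can be transported back into a statement about the global $I^{-1}$, after which the representation $D=\cap D_{S_{\alpha}}$ finishes the job. The only points requiring care are keeping the two different inverse operations straight (inverse in $D$ versus inverse in $D_{S_{\alpha}}$) and confirming that the finitely generated extension formula legitimately applies to $(a,b)$; neither is a genuine obstacle, so I expect no serious difficulty.
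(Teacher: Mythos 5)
Your proof is correct and takes essentially the same route as the paper: reduce to the two-generated test ideal $I=(a,b)(a,b)^{-1}$, use the finitely generated extension formula together with the $v$-domain hypothesis to get $(ID_{S_{\alpha }})^{-1}=D_{S_{\alpha }}$, convert this via the simple extending identity into a statement about $I^{-1}D_{S_{\alpha }}$, and finish by intersecting over $\alpha $. The only difference is a minor technical one: where the paper invokes the fact that $I^{-1}$ is divisorial to assert the equality $I^{-1}=\cap _{\alpha }I^{-1}D_{S_{\alpha }}$, you bypass divisoriality entirely by using the two trivial inclusions $I^{-1}\subseteq \cap _{\alpha }I^{-1}D_{S_{\alpha }}=\cap _{\alpha }D_{S_{\alpha }}=D$ and $D\subseteq I^{-1}$ (the latter since $I\subseteq D$), which is, if anything, a slight simplification.
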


\begin{proof}
Note that, as the inverse of an ideal is divisorial, we have $%
(((a,b)(a,b)^{-1}))^{-1}=\cap (((a,b)(a,b)^{-1}))^{-1}D_{S_{\alpha }}$ $%
=\cap ((((a,b)(a,b)^{-1}))D_{S_{\alpha }})^{-1}=\cap D_{S_{\alpha }}=D.$
\end{proof}

But there is a better result available on the market in the form of
Proposition 3.1 of \cite{FZ v}. This result says.

\begin{proposition}
\label{Proposition C}Let $\{D_{\lambda }|\lambda \in \Lambda \}$ be a family
of flat overrings of D such that $D=$ $\cap _{\lambda \in \Lambda }$ $%
D_{\lambda }$ . If each of $D_{\lambda }$ is a $v$-domain, then so is $D$.
\end{proposition}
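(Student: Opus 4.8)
The plan is to verify the $v$-invertibility of an arbitrary nonzero finitely generated ideal $A$ of $D$ directly, pushing the computation out to the rings $D_{\lambda}$ and there invoking the hypothesis that each $D_{\lambda}$ is a $v$-domain. The engine of the argument is the elementary observation that, because $D=\cap_{\lambda}D_{\lambda}$, inverses of $D$-ideals can be computed $\lambda$ by $\lambda$: for any nonzero fractional ideal $J$ of $D$ one has $J^{-1}=(D:J)=\cap_{\lambda}(D_{\lambda}:J)$, since $xJ\subseteq D$ holds exactly when $xJ\subseteq D_{\lambda}$ for every $\lambda$. Moreover, as each $D_{\lambda}$ is a ring containing $D$, the identity $xJ\subseteq D_{\lambda}\Leftrightarrow x(JD_{\lambda})\subseteq D_{\lambda}$ gives $(D_{\lambda}:J)=(JD_{\lambda})^{-1}$, the inverse now taken inside $D_{\lambda}$. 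Thus $J^{-1}=\cap_{\lambda}(JD_{\lambda})^{-1}$ for every $J\in F(D)$; note this first step uses only the intersection hypothesis, not flatness.

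Next I would bring in flatness. The crucial input is the classical fact that for a flat overring $T$ of $D$ and a finitely generated ideal $A$ of $D$ one has $A^{-1}T=(AT)^{-1}$, the inverse on the right computed in $T$; this follows from the flat-overring identity $(A:B)T=(AT:BT)$ for finitely generated $B$, applied with $B=A$ and $(D:A)=A^{-1}$. Consequently, for each $\lambda$, $(AA^{-1})D_{\lambda}=(AD_{\lambda})(A^{-1}D_{\lambda})=(AD_{\lambda})(AD_{\lambda})^{-1}$, where $AD_{\lambda}$ is a nonzero finitely generated ideal of $D_{\lambda}$.

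Now I would apply the hypothesis that each $D_{\lambda}$ is a $v$-domain: the ideal $AD_{\lambda}$ is $v$-invertible in $D_{\lambda}$, so $((AD_{\lambda})(AD_{\lambda})^{-1})^{-1}=D_{\lambda}$. Feeding $J=AA^{-1}$ into the intersection formula from the first step then yields $(AA^{-1})^{-1}=\cap_{\lambda}((AA^{-1})D_{\lambda})^{-1}=\cap_{\lambda}D_{\lambda}=D$, so $A$ is $v$-invertible in $D$. Since $A$ was an arbitrary nonzero finitely generated ideal (one could even restrict to two-generated ideals by \cite{MNZ}), $D$ is a $v$-domain.

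The one step that genuinely requires care, and the only place flatness is used, is the extension identity $A^{-1}D_{\lambda}=(AD_{\lambda})^{-1}$ for finitely generated $A$. For a general overring this can fail, and indeed it is precisely the failure of inverses to extend that, as the paper's Heinzer example shows, prevents arbitrary quotient rings of a $v$-domain from being $v$-domains; flatness of $D_{\lambda}$ over $D$ is exactly what forces the formation of inverses of finitely generated ideals to commute with extension. Everything else in the argument is formal, resting on the termwise computation of the inverse furnished by $D=\cap_{\lambda}D_{\lambda}$.
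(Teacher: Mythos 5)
Your proof is correct, but note that the paper itself offers no proof to compare against: it imports the statement verbatim as Proposition 3.1 of \cite{FZ v}. Your argument is in fact the standard one underlying that citation: compute inverses termwise over $D=\cap _{\lambda }D_{\lambda }$, use flatness to replace $(AA^{-1})D_{\lambda }$ by $(AD_{\lambda })(AD_{\lambda })^{-1}$, and invoke $v$-invertibility in each $D_{\lambda }$; your intersection formula $J^{-1}=\cap _{\lambda }(JD_{\lambda })^{-1}$ and the final assembly are exactly right. The only step where your write-up is looser than it should be is the justification of the key identity $A^{-1}D_{\lambda }=(AD_{\lambda })^{-1}$: the flat base-change formula $(A:B)T=(AT:BT)$ for finitely generated $B$ is usually stated for colons of integral ideals computed inside the ring (and its Hom-theoretic proof requires $B$ finitely presented, not merely finitely generated), whereas what you need is the colon computed in the quotient field $K$ with $B=A$ only finitely generated. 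The fractional version is nonetheless true for a flat overring $T$ of $D$: writing $A=(a_{1},\dots ,a_{n})$ one has $A^{-1}=\bigcap_{i}a_{i}^{-1}D$, a finite intersection, and flatness makes extension to $T$ commute with finite intersections of submodules of $K$, so $A^{-1}T=\bigcap_{i}a_{i}^{-1}T=(AT)^{-1}$; alternatively, one can quote Richman's theorem that a flat overring is locally a localization of $D$ and use the fact that inverses of finitely generated ideals commute with localization. With that justification made explicit, your proof is complete and is, in substance, the proof of the result the paper cites.
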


Let us recall that a prime ideal $P$ is called an associated prime of a
principal ideal $(a)$ if $P$ is minimal over an ideal of the form $0\neq
(a):(b)=\{r\in D|rb\in (a)\}\neq D.$ Associated primes of principal ideals,
or simply associated primes, of $D$ have been studied by quite a few
authors, but our reference in this regard is \cite{BH}. According to
Proposition 4 of \cite{BH}, if $S$ is a multiplicative set of $D$ and $%
\{P_{\alpha }\}$ is the family of associated primes of principal ideals of $D
$ disjoint from $S,$ then $D_{S}=\cap _{\alpha }D_{P_{\alpha }}.$

With Proposition \ref{Proposition C} at hand, we can state and prove the
following characterization of super $v$-domains.

\begin{theorem}
\label{Theorem D}(\cite[Proposition 3.4]{FZ v})The following are equivalent
for an integral domain $D.$ (1) $D_{S}$ is a $v$-domain for every
multiplicative set $S$ of $D,$ (2) $D_{M}$ is a $v$-domain for every maximal
ideal $M$ of $D,$ (3) $D_{P}$ is a $v$-domain for every prime ideal $P$ of $%
D $ and (4) $D_{P}$ is a $v$-domain for every associated prime $P$ of $D.$
\end{theorem}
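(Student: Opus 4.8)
The engine for every nontrivial implication will be the combination of \cite{BH} (Proposition 4) with Proposition \ref{Proposition C}: the former lets me write \emph{any} quotient ring as an intersection of localizations at associated primes, and the latter upgrades ``each piece is a $v$-domain'' to ``the intersection is a $v$-domain.'' The plan is therefore to prove that $(1)$, $(3)$ and $(4)$ are equivalent by a short cycle, to dispose of the implications \emph{into} $(2)$ for free, and finally to feed $(2)$ back into the cycle, which is where the real work lies.

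The cheap implications come first. Taking $S=D\setminus P$ gives $(1)\Rightarrow(3)$, and $(1)\Rightarrow(2)$, $(1)\Rightarrow(4)$ are the same observation with a maximal ideal $M$, respectively an associated prime, in place of $P$. Since maximal ideals and associated primes are in particular prime, $(3)\Rightarrow(2)$ and $(3)\Rightarrow(4)$ are immediate. Now the key implication $(4)\Rightarrow(1)$: let $S$ be an arbitrary multiplicative set and let $\{P_{\alpha}\}$ be the family of associated primes of principal ideals of $D$ disjoint from $S$, so that $D_{S}=\cap_{\alpha}D_{P_{\alpha}}$ by Proposition 4 of \cite{BH}. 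Because each $P_{\alpha}$ is disjoint from $S$, the ring $D_{P_{\alpha}}$ is a localization of $D_{S}$, hence a flat overring of $D_{S}$, and by $(4)$ it is a $v$-domain; applying Proposition \ref{Proposition C} with $D_{S}$ as the base ring shows $D_{S}$ is a $v$-domain, which is $(1)$. The identical argument with $S=D\setminus P$, for which the relevant $P_{\alpha}$ are exactly the associated primes contained in $P$, proves $(4)\Rightarrow(3)$. Together with the trivial implications this already yields $(1)\Leftrightarrow(3)\Leftrightarrow(4)$ as well as $(3)\Rightarrow(2)$.

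The remaining and genuinely delicate point is to close the loop through $(2)$, that is, to prove $(2)\Rightarrow(4)$ (equivalently $(2)\Rightarrow(3)$). Here the difficulty is precisely the one flagged before Lemma \ref{Lemma A}: condition $(2)$ controls only localizations at \emph{maximal} ideals, whereas an associated prime $P$ is typically far from maximal, and $D_{P}$ cannot be recovered by intersecting the $D_{M}$ since, for $M\supseteq P$, one has $D_{M}\subseteq D_{P}$, so the $D_{M}$ are \emph{sub}rings of $D_{P}$ rather than overrings and Proposition \ref{Proposition C} does not apply to them. The route I would take is to fix a maximal ideal $M\supseteq P$ and use the correspondence, again from \cite{BH}, under which $PD_{M}$ is an associated prime of a principal ideal of the local $v$-domain $D_{M}$, with $D_{P}=(D_{M})_{PD_{M}}$. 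This reduces the assertion to a purely \emph{local} statement: in a local $v$-domain, the localization at an associated prime of a principal ideal is again a $v$-domain.

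Proving that local statement is the main obstacle. The naive attempt — transfer $v$-invertibility of a two-generated ideal $(a,b)$ across the flat map $D_{M}\to D_{P}$ — founders on the fact that $\left((a,b)(a,b)^{-1}\right)^{-1}$ need not be finitely generated, so $\bigl(JD_{P}\bigr)^{-1}=J^{-1}D_{P}$ can fail; this is exactly the defect that makes ``super extending'' nonautomatic. I therefore expect the resolution to require expressing $(D_{M})_{PD_{M}}$ itself as an intersection of flat overrings that are \emph{already known} to be $v$-domains, and feeding that back into Proposition \ref{Proposition C}, rather than any direct localization computation. Isolating a class of flat overrings for which this representation is available, without reintroducing the very associated primes one is trying to control, is where I anticipate the crux of the argument to lie.
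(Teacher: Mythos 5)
Your positive steps coincide with the paper's own proof: the specialization implications are handled the same way, and your argument for $(4)\Rightarrow (1)$ --- write $D_{S}=\cap _{\alpha }D_{P_{\alpha }}$ over the associated primes $P_{\alpha }$ disjoint from $S$ (Proposition 4 of \cite{BH}), note that each $D_{P_{\alpha }}$ is a flat overring of $D_{S}$ which is a $v$-domain by (4), and invoke Proposition \ref{Proposition C} with $D_{S}$ as the base ring --- is precisely the argument the paper gives, written if anything more carefully, since the paper leaves implicit that Proposition \ref{Proposition C} is being applied over $D_{S}$ rather than over $D$.

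The gap is that you never derive anything from (2). What you actually establish is $(1)\Leftrightarrow (3)\Leftrightarrow (4)$ together with $(1)\Rightarrow (2)$; the implication out of (2) is reduced to a ``local statement'' (in a quasi-local $v$-domain, the localization at an associated prime of a principal ideal is again a $v$-domain) which you explicitly leave unproved, offering only a conjectural strategy. Since the theorem asserts a four-fold equivalence, this is a genuine gap: as written, your proposal proves the equivalence of (1), (3) and (4) only. For comparison, the paper's cycle is $(1)\Rightarrow (2)\Rightarrow (3)\Rightarrow (4)\Rightarrow (1)$, where only $(4)\Rightarrow (1)$ is argued and the rest of the chain is declared obvious; in particular the paper passes through $(2)\Rightarrow (3)$ --- exactly the implication you isolate --- without giving an argument, resting in effect on the citation of \cite[Proposition 3.4]{FZ v}. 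Your structural objection is sound: $(2)\Rightarrow (3)$ is not a specialization, because for a nonmaximal prime $P\subseteq M$ the ring $D_{P}=(D_{M})_{PD_{M}}$ is a proper quotient ring of the $v$-domain $D_{M}$, and the theme of this very paper is that quotient rings of $v$-domains need not be $v$-domains; nor does Proposition \ref{Proposition C} apply to the family $\{D_{M}\}$, since those rings are subrings, not overrings, of $D_{P}$. So you have correctly located the one implication that requires an argument not supplied by your write-up (nor, explicitly, by the paper's proof); but identifying the obstacle is not overcoming it, and until the implication from (2) is actually proved your proposal does not yield the stated theorem.
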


\begin{proof}
That (1) $\Rightarrow $ (2) $\Rightarrow $ (3) $\Rightarrow $ (4) is
obvious. For (4) $\Rightarrow $ (1), let $S$ be a multiplicative set of $D$
and let $\mathcal{F}=\{P_{\alpha }\}$ be the family of associated primes
disjoint from $S.$ Then by (4) each of $D_{P_{\alpha }}$ is a $v$-domain and
by \cite[Proposition 4]{BH} $D_{S}=\cap _{P_{\alpha }\in \mathcal{F}%
}D_{P_{\alpha }}.$ Thus by Proposition \ref{Proposition C}, $D_{S}$ is a $v$%
-domain.
\end{proof}

There is, however, a situation in which $D_{S}$ is a $v$-domain, whenever $D$
is. That is when the multiplicative set $S$ in $D$ is a splitting set. If $S$
is a splitting set, the set $T=\{t\in D|$ $(t,s)_{v}=D$ for all $s\in S\}$
often denoted as $S^{\bot }$ is called the $m$-complement of $S.$ Indeed if $%
S$ is a splitting set and $T=S^{\bot },$ then $D=D_{S}\cap D_{T}$ and $%
dD_{S}\cap D=tD$ where $t\in T$ such that $d=ts$ for some $s\in S.$ A
splitting set $S$ of $D$ is an lcm splitting set if $sD\cap xD$ is principal
for all $s\in S$ and for all $x\in D\backslash \{0\}.$

\begin{theorem}
\label{Theorem E}Let $S$ be a splitting multiplicative set of $D$ and let $%
T=S^{\bot }.$ If $D$ is a $v$-domain, then so is $D_{S}.$ Moreover if $S$ is
an lcm splitting set then $D_{S}$ is a $v$-domain if and only if $D$ is a $v$%
-domain.
\end{theorem}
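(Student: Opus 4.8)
The plan is to prove the two assertions separately, handling ``$D$ a $v$-domain $\Rightarrow D_{S}$ a $v$-domain'' first (this also supplies the nontrivial implication of the ``moreover'' clause, since an lcm splitting set is in particular a splitting set), and then the converse under the lcm hypothesis.

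For the first direction I would show that a splitting set forces $D_{S}$ to be \emph{super extending}, after which the argument of Lemma \ref{Lemma A} finishes the job. Fix a nonzero ideal $I$ of $D$; the inclusion $I^{-1}D_{S}\subseteq (ID_{S})^{-1}$ is automatic, so the content is the reverse inclusion. Given $x=p/q\in (ID_{S})^{-1}$ with $p,q\in D$, I would use the splitting factorization to write $q=s_{q}q_{T}$ with $s_{q}\in S$ and $q_{T}\in T=S^{\bot }$. The key observation is that, because $q_{T}$ is $v$-coprime to $S$ (that is, $(q_{T},s)_{v}=D$ for all $s\in S$), the $S$-part $s_{q}$ is a unit in $D_{S}$, so for each $c\in I$ the membership $xc=pc/(s_{q}q_{T})\in D_{S}$ is equivalent to the $S$-free divisibility condition $q_{T}\mid pc$ in $D$, \emph{uniformly in} $c$. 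Thus $q_{T}\mid pc$ for all $c\in I$, i.e. $(p/q_{T})I\subseteq D$, so $p/q_{T}\in I^{-1}$ and $x=(p/q_{T})/s_{q}\in I^{-1}D_{S}$. This gives $(ID_{S})^{-1}=I^{-1}D_{S}$, and then Lemma \ref{Lemma A} applies to conclude that $D_{S}$ is a $v$-domain.

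For the converse I would use the decomposition $D=D_{S}\cap D_{T}$ together with Proposition \ref{Proposition C}: since $D_{S}$ and $D_{T}$ are flat overrings of $D$ whose intersection is $D$, it suffices to prove that each is a $v$-domain. By hypothesis $D_{S}$ is one, so the entire task is to show $D_{T}$ is a $v$-domain, and here the lcm hypothesis is decisive. By the two-generated criterion of \cite{MNZ} I only need that every ideal $(\alpha ,\beta )D_{T}$ is $v$-invertible. Clearing $T$-units and splitting off the $T$-parts of the generators (which are units in $D_{T}$), such an ideal has the form $(s_{1},s_{2})D_{T}$ with $s_{1},s_{2}\in S$. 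The lcm condition gives $s_{1}D\cap s_{2}D=\ell D$ principal, and since finite intersections of finitely generated ideals commute with the flat localization $D\rightarrow D_{T}$, we get $s_{1}D_{T}\cap s_{2}D_{T}=\ell D_{T}$. A principal lcm of $s_{1},s_{2}$ forces $((s_{1},s_{2})D_{T})^{-1}=\tfrac{\ell }{s_{1}s_{2}}D_{T}$ to be principal, whence $(s_{1},s_{2})D_{T}$ is $v$-invertible. Hence $D_{T}$ is a $v$-domain, and Proposition \ref{Proposition C} yields that $D$ is a $v$-domain.

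The main obstacle is the super-extending step in the first direction: inverses of ideals that are not finitely generated do not commute with localization in general (this is precisely the phenomenon that can destroy the $v$-domain property in a quotient ring), so everything hinges on the coprime ``$S$-part/$T$-part'' calculus of splitting sets to make the divisibility condition defining $(ID_{S})^{-1}$ insensitive to factors from $S$. Before using that calculus I would take care to record that the factorization $d=d^{\prime }s$ is unique up to units (using that $S$ is saturated and $v$-coprime to $T$), since the uniform-denominator conclusion $q_{T}\mid pc$ for all $c\in I$ depends on it.
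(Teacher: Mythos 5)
Your proof is correct, but it takes a genuinely different route from the paper's in both halves. For the forward direction the paper argues by contradiction: if $(((a,b)(a,b)^{-1})D_{S})_{v}\neq D_{S}$, the offending ideal is trapped under a proper principal fractional ideal $\frac{x}{y}D_{S}$ with $x,y$ chosen in $T$ (possible because $S$ splits), and the containment is pulled back to $D$ via $xD_{S}\cap D=xD$ (Theorem 2.2 of \cite{AAZ spl}), where $v$-invertibility of $(a,b)$ in $D$ forces $yD\subseteq xD$, a contradiction. You instead prove the stronger structural fact that a splitting set makes $D_{S}$ \emph{super extending} ($(ID_{S})^{-1}=I^{-1}D_{S}$ for every nonzero ideal $I$) and then quote Lemma \ref{Lemma A}. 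This is a nice twist: the paper sets up Lemma \ref{Lemma A} but then remarks that super extending is ``too much even for our needs'' and proves Theorem \ref{Theorem E} without it; you show that for splitting sets super extending is actually attainable, and your key computation (that $q_{T}\mid s\,pc$ with $s\in S$ forces $q_{T}\mid pc$ because $(q_{T},s)_{v}=D$, hence $\frac{pc}{q_{T}}\in (q_{T},s)^{-1}=D$) is exactly the divisibility fact underlying the paper's use of $xD_{S}\cap D=xD$. For the converse, both you and the paper write $D=D_{S}\cap D_{T}$ and invoke Proposition \ref{Proposition C}; the paper disposes of $D_{T}$ by citing that it is a GCD domain (Theorem 2.4 of \cite{AAZ spl}), while you verify directly, via the two-generated criterion of \cite{MNZ} and the identity $((s_{1},s_{2})D_{T})^{-1}=\frac{1}{s_{1}s_{2}}(s_{1}D_{T}\cap s_{2}D_{T})=\frac{\ell }{s_{1}s_{2}}D_{T}$, that $D_{T}$ is a $v$-domain --- weaker than the GCD statement but all that is needed, and self-contained modulo the splitting-set basics. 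One small remark: your closing worry about uniqueness of the splitting factorization is unnecessary; the argument fixes one factorization $q=s_{q}q_{T}$ of the single denominator $q$ and never compares two factorizations, so no uniqueness is invoked anywhere.
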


\begin{proof}
Suppose that $D_{S}$ is not a $v$-domain. That is, there is a pair $a,b$ of $%
D_{S}$ such that $(((a,b)(a,b)^{-1})$ $D_{S})_{v}\neq D_{S}.$ Since $%
(r,s)^{-1}D_{S}=((r,s)D_{S})^{-1},$ for $r,s\in D\backslash \{0\},$ we can
take $a,b\in D$ and regard $(a,b)(a,b)^{-1}$ as an ideal of $D.$ Since $%
(((a,b)(a,b)^{-1})$ $D_{S})_{v}\neq D_{S}$, $(a,b)(a,b)^{-1}\cap S=\phi .$
Again since $(((a,b)(a,b)^{-1})$ $D_{S})_{v}\neq D_{S}$ there exist $x,y\in
D_{S}$ such that $((a,b)(a,b)^{-1})$ $D_{S}\subseteq \frac{x}{y}D_{S}$ where 
$x\nmid y$ in $D_{S}.$ As $S$ is a splitting set, we can take $x,y\in T.$
But then $y((a,b)(a,b)^{-1})$ $D_{S}\subseteq xD_{S}$ and $%
y((a,b)(a,b)^{-1}) $ $\subseteq y((a,b)(a,b)^{-1})$ $D_{S}\cap D\subseteq
xD_{S}\cap D.$ As $x\in T,$ we have $xD_{S}\cap D=xD$ (\cite{AAZ spl},
Theorem 2.2). Thus we have $y((a,b)(a,b)^{-1})\subseteq xD.$ Applying the $v$%
-operation throughout and noting that $D$ is a $v$-domain we conclude that $%
yD\subseteq xD.$ But then $yD_{S}\subseteq xD_{S},$ a contradiction. Whence $%
D_{S}$ is a $v$-domain. For the moreover part note that $D=D_{S}\cap D_{T}$
where $D_{T}$ is a GCD domain, by Theorem 2.4 of \cite{AAZ spl}. Thus if $S$
is lcm splitting $D_{S}$ is a $v$-domain and so is $D_{T},$ being a GCD
domain, forcing $D=D_{S}\cap D_{T}$ to be a $v$-domain, by Proposition \ref%
{Proposition C}.
\end{proof}

\begin{theorem}
\label{Theorem F}Let $D$ be an integral domain with quotient field $K$ and
let $X$ be an indeterminate over $D.$ Then $D$ is a super $v$-domain if and
only if $D+XK[X]$ is a super $v$-domain.
\end{theorem}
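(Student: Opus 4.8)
The plan is to reduce both sides of the biconditional to the localization criterion of Theorem \ref{Theorem D} and then to analyze the prime ideals of $R:=D+XK[X]$. Write $M=XK[X]$, a prime of $R$ with $R/M\cong D$ (via $f\mapsto f(0)$). I would first classify the maximal ideals $N$ of $R$ by their contraction $N\cap D$. If $N\cap D=(0)$, then $N$ is disjoint from $D\backslash\{0\}$, so it survives in the quotient ring $R_{D\backslash\{0\}}=K+XK[X]=K[X]$; hence $R_{N}$ is a localization of the PID $K[X]$, i.e. a DVR or $K(X)$, and in particular a valuation domain and so a $v$-domain. Such primes therefore impose no condition. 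If instead $N\cap D=\mathfrak{m}\neq(0)$, a short argument (once $\mathfrak{m}\subseteq N$ one gets $cX^{n}=(cX^{n-1})X\in N$ for all $c\in K$, $n\geq1$) shows $M\subseteq N$, whence $N=\mathfrak{m}+XK[X]$ with $\mathfrak{m}$ maximal in $D$.

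Next I would compute $R_{N}$ for $N=\mathfrak{m}+XK[X]$. Inverting $R\setminus N$ amounts to inverting every $f\in R$ whose constant term $f(0)$ lies outside $\mathfrak{m}$; a direct fraction computation (reduce to denominators with constant term $1$) identifies the result as the pullback $R_{N}=D_{\mathfrak{m}}+\mathcal{M}$, where $T:=K[X]_{XK[X]}$ is the rank-one discrete valuation ring with maximal ideal $\mathcal{M}=XT$ and residue field $T/\mathcal{M}=K$. Thus $R_{N}$ is a classical $D+M$ ring built over the quasilocal domain $D_{\mathfrak{m}}$ and the valuation ring $T$, glued along the common quotient field $K$ of $D_{\mathfrak{m}}$ and of $T/\mathcal{M}$.

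The heart of the proof, and the step I expect to be the main obstacle, is the transfer lemma: for a valuation ring $T$ with maximal ideal $\mathcal{M}$ and residue field $k$, and a subring $W\subseteq k$ with quotient field $k$, the composite $A:=W+\mathcal{M}$ is a $v$-domain if and only if $W$ is a $v$-domain. I would prove it using the two-generated criterion for $v$-domains from \cite{MNZ}, so that it suffices to test ideals $(\alpha,\beta)$ of $A$. Here $\mathcal{M}$ is a common divisorial ideal of $A$ and $T$ with $(\mathcal{M}:\mathcal{M})=\mathcal{M}^{-1}=T$ (easily checked in the DVR $T$), and this lets one reduce the computation of $(\alpha,\beta)(\alpha,\beta)^{-1}$ and its $v$-closure to two regimes: the case $\{\alpha,\beta\}\subseteq\mathcal{M}$, handled inside the valuation ring $T$ (itself a $v$-domain), and the case where $\alpha,\beta$ have nonzero residues, which descends to the $v$-invertibility test for the corresponding two-generated ideal of $W$. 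Because $T$ is a valuation ring, no extra hypothesis intervenes (unlike the Pr\"ufer transfer, where a higher-dimensional $T$ forces $W$ to be a field); alternatively one may cite the $D+M$ analysis for $v$-domains in \cite{FZ v}.

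Finally I would assemble the equivalence. By the equivalence (1)$\Leftrightarrow$(2) of Theorem \ref{Theorem D} applied to $R$, the ring $R$ is a super $v$-domain iff $R_{N}$ is a $v$-domain for every maximal $N$; by the classification the only nontrivial contributions are the rings $R_{\mathfrak{m}+XK[X]}=D_{\mathfrak{m}}+\mathcal{M}$, and by the transfer lemma these are $v$-domains iff each $D_{\mathfrak{m}}$ is a $v$-domain. By the same equivalence (1)$\Leftrightarrow$(2) of Theorem \ref{Theorem D} applied to $D$, the latter condition says exactly that $D$ is a super $v$-domain. Hence $R=D+XK[X]$ is a super $v$-domain if and only if $D$ is, with the two implications of the biconditional using the two directions of the transfer lemma respectively.
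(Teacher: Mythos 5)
Your route is genuinely different from the paper's. The paper handles an arbitrary quotient ring $S$ of $R=D+XK[X]$ head-on: by Proposition 2.2 of \cite{CMZ 2} every such $S$ is a localization of some $D_{U}+XK[X]$ at a multiplicative set generated by elements $1+Xg(X)$, which by Theorem 4.21 of \cite{CMZ 1} form a splitting set, so Theorem \ref{Theorem E} and Theorem 4.42 of \cite{CMZ 1} finish; the converse is the one-line computation $(D+XK[X])_{T}=D_{T}+XK[X]$. You instead reduce both sides to the local criterion of Theorem \ref{Theorem D} and compute the local rings of $R$ directly. Your supporting steps are correct: maximal ideals with $N\cap D=(0)$ give localizations of $K[X]$; those with $N\cap D=\mathfrak{m}\neq (0)$ equal $\mathfrak{m}+XK[X]$ and give $R_{N}=D_{\mathfrak{m}}+\mathcal{M}$ with $\mathcal{M}=XK[X]_{XK[X]}$; and indeed $\mathcal{M}^{-1}=(\mathcal{M}:\mathcal{M})=T$ and $\mathcal{M}_{v}=\mathcal{M}$ over $A=D_{\mathfrak{m}}+\mathcal{M}$. (Two small repairs: to see $XK[X]\subseteq N$, write $cX^{n}=d\cdot ((c/d)X^{n})$ with $0\neq d\in \mathfrak{m}\subseteq N$ --- your factorization $cX^{n}=(cX^{n-1})X$ assumes what is being proved; and since the same classification works verbatim for all primes, you may as well use (1)$\Leftrightarrow $(3) of Theorem \ref{Theorem D} rather than the maximal-ideal version.)

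The genuine gap is in the transfer lemma, which you yourself identify as the heart. Its case division is misallocated: the case $\{\alpha ,\beta \}\subseteq \mathcal{M}$ is \emph{not} ``handled inside the valuation ring $T$''; it is precisely the case that carries the content of the lemma, and it too must descend to $W$. Concretely, take $a,b\in W$ and $\alpha =aX$, $\beta =bX$. Then $(\alpha ,\beta )A=X((a,b)W+\mathcal{M})$, and using the direct decomposition $T=K\oplus \mathcal{M}$ one gets $(A:E+\mathcal{M})=(W:_{K}E)+\mathcal{M}$ for every nonzero $W$-submodule $E$ of $K$, whence $\bigl((\alpha ,\beta )A\cdot ((\alpha ,\beta )A)^{-1}\bigr)_{v}=J_{v}+\mathcal{M}$, where $J=(a,b)W\cdot ((a,b)W)^{-1}$ and $J_{v}$ is computed in $W$. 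So $(\alpha ,\beta )A$ is $v$-invertible in $A$ exactly when $(a,b)W$ is $v$-invertible in $W$; that $(\alpha ,\beta )T$ is principal in the DVR $T$ is irrelevant. In the direction you need ($W$ a $v$-domain implies $A$ one), your dichotomy would wave these ideals through ``because $T$ is a valuation ring,'' which is not a valid inference; in the converse direction it would miss exactly the witnessing ideals when $W$ fails to be a $v$-domain. The correct reduction is: modulo the units $1+\mathcal{M}$ of $A$, every nonzero element of $A$ is either an element of $W$ or of the form $cX^{n}$ with $c\in K^{\ast }$, $n\geq 1$; the mixed pairs $(a,cX^{n})$ are outright invertible (since $a^{-1}(a,cX^{n})\subseteq A$), the pairs $(cX^{m},c^{\prime }X^{n})$ with $m\neq n$ are principal, and every remaining pair descends, as above, to a two-generated fractional $W$-ideal with generators in $K=qf(W)$ --- which suffices, since after clearing denominators $v$-invertibility of such ideals is exactly what being a $v$-domain gives. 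With this repair, or by citing the pullback results for $v$-domains (e.g.\ in \cite{AAFZ v}) in place of the sketch, your proof is complete; note that this lemma is exactly the work the paper outsources to Theorem 4.42 of \cite{CMZ 1} together with Theorem \ref{Theorem E}.
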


\begin{proof}
Let $D$ be a super $v$-domain. Then by Theorem 4.42 of \cite{CMZ 1} $%
T=D+XK[X]$ is a $v$-domain. Also by Proposition 2.2 of \cite{CMZ 2}, every
overring $S$, and hence every quotient ring $S$, of $T$ is a quotient ring
of $S\cap K+XK[X].$ According to the proof of Proposition 2.2 of \cite{CMZ 2}
the elements of $S$ are of the form $\frac{\alpha +Xf(X)}{1+Xg(X)}$ where $%
\alpha \in S\cap L.$ Let $U=\{u\in D|u$ is a unit in $S\}.$ Then $%
D_{U}\subseteq S\cap K.$ Let $h\in S.$ Then $h=\frac{a+Xf(X)}{b+Xg(X)}$
where, $a,b\in D$ and, $b+Xg(X)$ is a unit in $S.$ This gives $b=b(1+\frac{X%
}{b}g(X)(1+\frac{X}{b}g(X)^{-1}$ and so $b$ is a unit in $S\cap K$, whence $%
b\in U.$ But then $a/b=h(0)\in D_{U}.$ Noting that $h(0)\in S\cap K$ we
conclude that $D_{U}=S\cap K.$ This leads to the conclusion that $S$ is a
quotient ring of $D_{U}+XK[X].$ Since $D$ is a super $v$-domain $D_{U}$ is a 
$v$-domain and so is $D_{U}+XK[X].$ Next, by the proof of Proposition 2.2 of 
\cite{CMZ 2}, denoting by $U(S)$ the set of units of $S$ we have $%
U(S)=\{f\in D_{U}+XK[X]|f=u+Xg(X),$ where $u$ is a unit in $D_{U}\}$ and as
elements of the form $1+Xg(X)$ are finite products of height one primes of $%
D_{U}+XK[X]$ (\cite{CMZ 1}, Theorem 4.21) we conclude that $U(S)$ is a
splitting set generated by primes. But then, by Theorem \ref{Theorem E}, $%
S=(D_{U}+XK[X])_{U(S)}$ is a $v$-domain. For the converse note that if $T$
is a multiplicative set in $D$, then $(D+XK[X])_{T}=D_{T}+XK[X]$ which is a $%
v$-domain if and only if $D_{T}$ is a $v$-domain. Thus if $D+XK[X]$ is a
super $v$-domain, then so is $D.$
\end{proof}

Some super $v$-domains such as the P-domains have the property that $D_{P}$
is a valuation domain for every associated prime of a principal ideal of $D$%
. Now if $P$ is an associated prime of a principal ideal, one can easily
show that $D_{P}$ is $t$-local, i.e., $PD_{P}$ is a $t$-ideal \cite{FZ v}.
This may lead one to ask if a $t$-local super $v$-domain is close to a
valuation domain. The answer is: Close but not too close, as there does
exist a one dimensional completely integrally closed integral domain $%
\mathcal{N}$, due to Nagata \cite{Nag 1} and \cite{Nag 2}, that is not a
valuation domain and a one dimensional quasi local domain is $t$-local. (Of
course a completely integrally closed domain is a $v$-domain.) Now,
trivially, $\mathcal{N}$ has the property that every quotient ring of $%
\mathcal{N}$ is $\mathcal{N}$ or $qf(\mathcal{N)}$. Thus, albeit trivially, $%
\mathcal{N}$ serves as an example of a super $v$-domain. This gives us the
following example.

\begin{example}
\label{Example G} Let $F$ be the quotient field of $\mathcal{N}$ and let $X$
be an indeterminate on $F.$ Then $\mathcal{N+}XF[X]$ is a super $v$-domain.
\end{example}

Illustration: By Theorem \ref{Theorem F}, every quotient ring $S$ of $%
\mathcal{N+}XF[X]$ is a quotient ring $(\mathcal{N+}XF[X])_{U}$ of $\mathcal{%
N+}XF[X],$ by a multiplicative set $U$ generated by elements of the form $%
1+Xg(X),$ or a quotient ring of $F[X].$ Since $\mathcal{N+}XF[X]$ is a $v$
domain and elements of the form $1+Xg(X)$ being products of height one
primes, $U$ is a splitting set and by Theorem \ref{Theorem E}, $(\mathcal{N+}%
XF[X])_{U}$ is a $v$-domain. Also since $F[X]$ is a PID every quotient ring
of $F[X]$ is a PID and hence a $v$-domain. So, every quotient ring of $%
\mathcal{N+}XF[X]$ is indeed a $v$-domain.

Indeed $\mathcal{N+}XF[X]$ provides a "non-trivial" example of a super $v$%
-domain and Theorem \ref{Theorem F} provides a scheme for producing super $v$%
-domains of any Krull dimension.

Next call a domain $D$ a $v$-local domain if $D$ is quasi local such that
the maximal ideal $M$ of $D$ is divisorial. Of course, the situation can
drastically change if we relax "$t$-local" to "$v$-local".

\begin{proposition}
\label{Proposition H}An integral domain $D$ is a $v$-local $v$-domain if and
only if $D$ is a valuation domain with maximal ideal $M$ principal.
\end{proposition}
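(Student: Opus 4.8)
The plan is to prove the two implications separately, with the forward direction carrying all the content. For the easy direction, suppose $D$ is a valuation domain whose maximal ideal $M=(m)$ is principal. A valuation domain is Pr\"ufer, so every nonzero finitely generated ideal is invertible and hence $v$-invertible; thus $D$ is a $v$-domain. Being a valuation domain, $D$ is quasi-local, and since $M$ is principal it is automatically divisorial (by property (i) every principal fractional ideal is a $\ast$-ideal, so $M_v=M$). Hence $D$ is a $v$-local $v$-domain.

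For the forward direction, assume $D$ is a $v$-local $v$-domain, i.e. $D$ is quasi-local with maximal ideal $M$ satisfying $M_v=M$, and every nonzero finitely generated ideal is $v$-invertible. I would first show that \emph{every} nonzero finitely generated ideal of $D$ is in fact invertible. Let $I$ be such an ideal and put $J=II^{-1}\subseteq D$, an integral ideal with $J_v=D$ by the $v$-domain hypothesis. Suppose toward a contradiction that $J\neq D$. Since $D$ is quasi-local, $J\subseteq M$, and applying the $v$-operation (which is order-preserving) gives $D=J_v\subseteq M_v=M$, absurd since $1\in D\setminus M$. Therefore $J=II^{-1}=D$, so $I$ is invertible. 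This is the crux, and it is exactly where the divisoriality of $M$ is needed rather than merely that $M$ is a $t$-ideal: for a general $v$-domain $II^{-1}$ may be a proper ideal with $(II^{-1})_v=D$, and it is the equation $M_v=M$ that prevents such a proper ideal from lying inside $M$. This is also consistent with Nagata's example $\mathcal{N}$, a quasi-local ($t$-local) $v$-domain that is not a valuation domain: there the maximal ideal fails to be divisorial, so the contradiction above cannot be produced. I expect this step to be the main obstacle, in the sense that the entire proof hinges on spotting this short contradiction and correctly attributing the work to $M_v=M$.

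Having shown every finitely generated ideal invertible, I would invoke the standard fact that an invertible ideal in a quasi-local domain is principal: writing $\sum a_ib_i=1$ with $a_i\in I$ and $b_i\in I^{-1}$, some $a_ib_i$ must be a unit, whence $I=a_iD$. Thus every finitely generated ideal of $D$ is principal, so $D$ is a quasi-local B\'ezout domain and therefore a valuation domain (given $a,b$, from $(a,b)=(c)$ one gets a unit relation forcing $a\mid b$ or $b\mid a$).

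It remains to extract that $M$ is principal. Since $M_v=M\neq D$ we have $M^{-1}\supsetneq D$; choose $x\in M^{-1}\setminus D$, so that $xM\subseteq D$, and since $D$ is now a valuation domain with $x\notin D$ we have $x^{-1}\in M$. For any $m\in M$, $xm\in D$ gives $m=x^{-1}(xm)\in x^{-1}D$, so $M\subseteq x^{-1}D$; combined with $x^{-1}\in M$ this yields $M=x^{-1}D$, principal. The remaining deductions after the invertibility step (invertible $\Rightarrow$ principal over a local ring, local B\'ezout $\Rightarrow$ valuation, and extracting a principal maximal ideal in a valuation domain whose maximal ideal is divisorial) are routine, so I would keep them brief.
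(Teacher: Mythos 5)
Your proof is correct and follows essentially the same route as the paper's: the identical contradiction $D=(II^{-1})_v\subseteq M_v=M$ shows every nonzero finitely generated ideal is invertible, hence principal since $D$ is quasi-local, so $D$ is a valuation domain; and your extraction of a principal $M$ from $x\in M^{-1}\setminus D$ is just a rephrasing of the paper's argument that $M\subseteq (a/b)D$ with $a\nmid b$ forces $M=(a/b)D$ (your $x$ is the paper's $b/a$). The only differences are expository: you write out the converse and the invertible-implies-principal step, which the paper dismisses as obvious.
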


\begin{proof}
Let $D$ be a $v$-local $v$-domain and let $A$ be a nonzero finitely
generated ideal of $D.$ Then $AA^{-1}=D.$ For if $AA^{-1}\neq D$ we must
have $AA^{-1}\subseteq M.$ But as $M$ is a $v$-ideal and $D$ a $v$-domain we
have $D=(AA^{-1})_{v}\subseteq M_{v}=M$ a contradiction. Whence every
nonzero finitely generated ideal of $D$ is invertible and hence principal,
because $D$ is $v$-local and hence quasi local. Thus $D$ is a valuation
domain. Now the maximal ideal being divisorial means $M_{v}\neq D$ which
means that there is a pair of elements $a,b$ of $D$ such that $M\subseteq
(a/b)D$ where $a\nmid b.$ Since $a\nmid b$ and $D$ is a valuation domain $%
M\subseteq (a/d)D$ a principal ideal of $D$. But then $M$ is principal
because $M$ is the maximal ideal. The converse is obvious.
\end{proof}

Let's recall from Griffin \cite[Theorem 5]{Gri} that $D$ is a PVMD if and
only if for every finitely generated nonzero ideal $I$ of $D$ we have $%
(II^{-1})_{t}=D$ if and only if $D_{P}$ is a valuation ring for every
maximal $t$-ideal of $D.$

\begin{corollary}
\label{Corollary J}Let $D$ be locally a $v$-domain. Suppose that for every
maximal $t$-ideal $M$ of $D$ we have $MD_{M}$ divisorial then $D$ is a PVMD.
\end{corollary}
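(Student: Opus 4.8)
The plan is to reduce everything to Griffin's local characterization of PVMDs quoted just before the statement: $D$ is a PVMD if and only if $D_P$ is a valuation ring for every maximal $t$-ideal $P$ of $D$. Consequently it suffices to fix an arbitrary maximal $t$-ideal $M$ of $D$ and to prove that the localization $D_M$ is a valuation domain.

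First I would record the two local facts the hypotheses supply. The ring $D_M$ is quasi-local, its unique maximal ideal being $MD_M$. Since $D$ is locally a $v$-domain, Theorem \ref{Theorem D} guarantees that $D_P$ is a $v$-domain for every prime $P$ of $D$; in particular $D_M$ is a $v$-domain. The standing hypothesis says moreover that $MD_M$ is divisorial, i.e. $(MD_M)_v = MD_M$ computed in $D_M$. Thus $D_M$ is quasi-local with divisorial maximal ideal, which is exactly the definition of a $v$-local domain.

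Now $D_M$ is a $v$-local $v$-domain, so Proposition \ref{Proposition H} applies and forces $D_M$ to be a valuation domain (indeed one with principal maximal ideal). As $M$ was an arbitrary maximal $t$-ideal, $D_P$ is a valuation ring for every maximal $t$-ideal $P$, and Griffin's criterion then yields that $D$ is a PVMD.

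The argument is really just an assembly of results already proved, so there is no single hard computational step; the work is conceptual. The one point to handle carefully is the reading of the word ``divisorial'' in the hypothesis: it must be interpreted as divisorial relative to the local ring $D_M$, so that the $v$-operation appearing in Proposition \ref{Proposition H} is the one computed in $D_M$ rather than in $D$. Once one also notes that ``locally a $v$-domain'' delivers precisely the missing ingredient --- that $D_M$ itself is a $v$-domain --- the chain Proposition \ref{Proposition H} $\Rightarrow$ valuation $\Rightarrow$ PVMD is immediate.
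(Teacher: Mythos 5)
Your proof is correct and follows essentially the same route as the paper's own (first) argument: localize at an arbitrary maximal $t$-ideal $M$, note that $D_M$ is a $v$-local $v$-domain, apply Proposition \ref{Proposition H} to get that $D_M$ is a valuation domain, and conclude by Griffin's criterion. Your explicit appeal to Theorem \ref{Theorem D} to pass from ``locally a $v$-domain'' to ``$D_M$ is a $v$-domain for the (possibly non-maximal) prime $M$'' is a careful touch the paper leaves implicit, but it does not change the approach; the paper additionally records a second, more computational proof showing directly that $(JJ^{-1})_t=D$, which you did not need.
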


\begin{proof}
For every maximal $t$-ideal $M$ we have $D_{M}$ a $v$-domain and $MD_{M}$ a
divisorial ideal. Then by Proposition \ref{Proposition H} we have that $%
D_{M} $ is a valuation domain with maximal ideal principal.

Alternative proof: Let $J$ be a nonzero ideal of $D.$ We claim that $JJ^{-1}$
is not in any maximal $t$-ideal of $D.$ For if $JJ^{-1}\subseteq M.$ Then $%
(JJ^{-1})D_{M}=JD_{M}J^{-1}D_{M}=JD_{M}(JD_{M})^{-1}\subseteq MD_{M}.$ Since 
$D_{M}$ is a $v$-domain, $D_{M}=((JD_{M}(JD_{M})^{-1})_{v}.$ Yet as $MD_{M}$
is divisorial and $JD_{M}J^{-1}D_{M}=JD_{M}(JD_{M})^{-1}\subseteq MD_{M}$ we
get $D_{M}=((JD_{M}(JD_{M})^{-1})_{v}\subseteq MD_{M}$ a contradiction. Now $%
JJ^{-1}$ not being in any maximal $t$-ideals means that $(JJ^{-1})_{t}=D.$
Thus every nonzero finitely generated ideal of $D$ is $t$-ivertible and this
is another characteristic property of PVMDs.
\end{proof}

Recall that a prime ideal $P$ of a domain $D$ is called strongly prime if $%
x,y\in K$ and $xy\in P$ imply that $x\in P$ or $y\in P.$ According to \cite%
{HH}, $D$ is a pseudo valuation domain PVD if every prime ideal of $D$ is
strongly prime. It turns out that a PVD is a valuation domain or a quasi
local domain $(D,M)$ such that $M^{-1}$~$=V$ a valuation ring. This makes
the maximal ideal of a non-valuation PVD a divisorial ideal.

\begin{corollary}
\label{Corollary J1} In a non-valuation PVD $D,$ every $v$-invertible ideal
is principal. Consequently a non-valuation PVD can never be a $v$-domain$.$
\end{corollary}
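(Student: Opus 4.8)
The plan is to exploit the two facts recalled just before the statement: a non-valuation PVD $D$ is quasi-local with maximal ideal $M$, and $M$ is divisorial, so that $M_{v}=M\neq D$. The heart of the matter is that in such a domain every $v$-invertible ideal is forced to be genuinely invertible, after which quasi-localness finishes the job.

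First I would take a $v$-invertible ideal $I$, so that $(II^{-1})_{v}=D$. Since $I^{-1}=(D:I)$, the product $II^{-1}$ is an integral ideal of $D$, i.e. $II^{-1}\subseteq D$. Now I would argue that $II^{-1}=D$: if not, then because $M$ is the unique maximal ideal of the quasi-local domain $D$, the proper integral ideal $II^{-1}$ lies in $M$. Applying the $v$-operation and using that $M$ is divisorial gives $D=(II^{-1})_{v}\subseteq M_{v}=M$, which is absurd. Hence $II^{-1}=D$, so $I$ is invertible, and an invertible ideal of a quasi-local domain is principal. This settles the first assertion.

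For the consequence, suppose toward a contradiction that the non-valuation PVD $D$ were a $v$-domain. Then every nonzero finitely generated ideal is $v$-invertible, hence principal by the first part; in particular every two-generated ideal $(a,b)$ is principal. Writing $(a,b)=(c)$ with $a=cr$, $b=cs$ and $(r,s)=D$, quasi-localness forces one of $r,s$ to be a unit, so $a\mid b$ or $b\mid a$. Thus $D$ is a valuation domain, contradicting that $D$ is a non-valuation PVD. Therefore a non-valuation PVD is never a $v$-domain.

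I expect the only delicate point to be the passage from $v$-invertibility to invertibility; everything hinges on $M$ being divisorial (so $M_{v}\neq D$) together with $M$ being the unique maximal ideal, both of which are supplied by the discussion preceding the corollary. Once invertibility is secured, the quasi-local reduction to principal (and then to valuation) ideals is routine.
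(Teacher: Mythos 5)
Your proof is correct, and the underlying trick is the same one the paper relies on, but your decomposition of the argument is genuinely different. The paper's own proof is a one-line citation: since the maximal ideal $M$ of a non-valuation PVD is divisorial, assuming $D$ is a $v$-domain makes $D$ a $v$-local $v$-domain, hence a valuation domain by Proposition \ref{Proposition H}, a contradiction. Notably, the paper's proof establishes only the ``consequently'' clause; the first assertion of the corollary (every $v$-invertible ideal is principal) is never argued there at all. You, by contrast, prove that first assertion directly, by running the computation that sits inside the proof of Proposition \ref{Proposition H} --- if $II^{-1}\subseteq M$ then $D=(II^{-1})_{v}\subseteq M_{v}=M$, absurd --- but applied to an arbitrary $v$-invertible ideal rather than only to finitely generated ones, and you then deduce the second assertion from the first by the standard quasi-local argument (every two-generated ideal principal forces comparability of elements, hence a valuation domain). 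What your route buys: it is self-contained, it actually proves the stated first claim, and it matches the logical structure signalled by ``consequently.'' What the paper's route buys: brevity, by reusing Proposition \ref{Proposition H} wholesale. One cosmetic point: you tacitly take the $v$-invertible ideal $I$ to be integral (so that $II^{-1}\subseteq D$); for a fractional ideal one first scales by a nonzero element of $K$, which changes neither $v$-invertibility nor principality.
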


\begin{proof}
Suppose that a non-valuation PVD $D$ is a $v$-domain. Then $D$ is a $v$%
-local $v$-domain and hence a valuation domain by Proposition \ref%
{Proposition H}, a contradiction.
\end{proof}

\begin{remark}
\label{Remark J2} Using the fact that the set of prime ideals in a PVD is
linearly ordered it is shown in \cite{HH} that a GCD PVD is a valuation
domain. However a non-valuation PVD $D$ can never be a GCD domain, because a
GCD domain is a $v$-domain. We can also say that a non-valuation PVD can
never be a PVMD, because a PVMD is a $v$-domain as well.
\end{remark}

Let $S$ be a multiplicative set of $D.$ Following \cite{AAZ t-spl} we say
that $d\in D\backslash \{0\}$ is $t$-split by $S$ if there are two integral
ideals $A,B$ of $D$ such that $(d)=(AB)_{t}$ where $B_{t}\cap S\neq \phi $
and $(A,s)_{t}=D$ for all $s\in S.$ As in \cite{AAZ t-spl} we call $S$ a $t$%
-splitting set if $S$ $t$-splits every $d\in D\backslash \{0\}.$ By Lemma
2.1 of \cite{AAZ t-spl} if $S$ is a $t$-splitting set of $D,$ then $%
dD_{S}\cap D=A_{t}$ is a $t$-invertible $t$-ideal and hence a $v$-ideal and
of course $B_{t}=dA^{-1}.$

\begin{theorem}
\label{Theorem L}Let $S$ be a $t$-splitting set of an integral domain $D.$
If $D$ is a $v$-domain, then so is $D_{S}.$
\end{theorem}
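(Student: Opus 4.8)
The plan is to mimic the contradiction argument used for splitting sets in Theorem~\ref{Theorem E}, replacing the role of the $m$-complement by the structural information recorded in Lemma~2.1 of \cite{AAZ t-spl} just above the statement. First I would suppose, for contradiction, that $D_{S}$ is not a $v$-domain. By the two-generated criterion of \cite{MNZ}, there is a two-generated ideal $(\alpha ,\beta )D_{S}$ that fails to be $v$-invertible. Writing $\alpha =a/s$ and $\beta =b/t$ with $a,b\in D$ and $s,t\in S$ and absorbing the units $s,t$ of $D_{S}$, I may take $\alpha =a,\beta =b\in D$. Since inverses commute with localization for the finitely generated ideal $(a,b)$, i.e. $((a,b)D_{S})^{-1}=(a,b)^{-1}D_{S}$, the object $I:=(a,b)(a,b)^{-1}$ is an integral ideal of $D$, and the failure of $v$-invertibility becomes $(ID_{S})_{v}\neq D_{S}$. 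Because $D$ is a $v$-domain, $I_{v}=((a,b)(a,b)^{-1})_{v}=D$; this is the single place where the hypothesis on $D$ enters.

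Next I would exploit the failure $(ID_{S})_{v}\neq D_{S}$. Since $ID_{S}\subseteq D_{S}$, this forces $(ID_{S})^{-1}\supsetneq D_{S}$, so I can choose $y/x\in (ID_{S})^{-1}\setminus D_{S}$ with $x,y\in D\setminus \{0\}$; thus $x\nmid y$ in $D_{S}$ and $yID_{S}\subseteq xD_{S}$. As $I\subseteq D$, this yields $yI\subseteq xD_{S}\cap D$. Here the $t$-splitting hypothesis does its work: by Lemma~2.1 of \cite{AAZ t-spl}, recalled immediately before the statement, $xD_{S}\cap D=A_{t}$ is a $t$-invertible $t$-ideal, hence a $v$-ideal, i.e. a divisorial ideal. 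Applying the $v$-operation to $yI\subseteq xD_{S}\cap D$ then gives $yD=(yI)_{v}\subseteq (xD_{S}\cap D)_{v}=xD_{S}\cap D$, where the first equality uses $I_{v}=D$ and the last uses divisoriality of $xD_{S}\cap D$. In particular $y\in xD_{S}$, i.e. $y/x\in D_{S}$, contradicting $x\nmid y$ in $D_{S}$. Hence no such pair $a,b$ exists and $D_{S}$ is a $v$-domain.

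The step I expect to be the crux is the passage $(yI)_{v}\subseteq xD_{S}\cap D$: it succeeds only because $xD_{S}\cap D$ is divisorial, and this divisoriality is precisely what a $t$-splitting set supplies through Lemma~2.1 of \cite{AAZ t-spl}. For a general multiplicative set $xD_{S}\cap D$ need not be a $v$-ideal, and the argument collapses. This parallels Theorem~\ref{Theorem E}, where the analogous role was played by the sharper equality $xD_{S}\cap D=xD$ available for $x$ in the $m$-complement of a splitting set; the $t$-splitting setting trades that principal conclusion for the weaker, but still sufficient, divisorial one. The only genuinely routine points, which I would not belabor, are that the reduction to $a,b\in D$ does not disturb $v$-invertibility and that $(ID_{S})_{v}\neq D_{S}$ indeed produces a proper principal fractional overideal; both are formal consequences of the star-operation identities collected in the preliminaries.
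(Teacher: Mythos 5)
Your proposal is correct and follows essentially the same route as the paper's own proof: reduce to an integral ideal $I=(a,b)(a,b)^{-1}$ of $D$ with $(ID_{S})_{v}\neq D_{S}$, extract $x,y\in D$ with $yID_{S}\subseteq xD_{S}$ and $x\nmid y$ in $D_{S}$, and then use Lemma~2.1 of \cite{AAZ t-spl} to see that $xD_{S}\cap D$ is divisorial, so that $yD=(yI)_{v}\subseteq xD_{S}\cap D$ yields the contradiction. The only cosmetic differences are that you phrase the choice of $y/x$ via $(ID_{S})^{-1}\supsetneq D_{S}$ rather than via principal fractional overideals, and you omit the paper's (unused) remark that $(a,b)(a,b)^{-1}\cap S=\phi$; you have correctly identified the divisoriality of $xD_{S}\cap D$ as the crux, exactly as in the paper.
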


\begin{proof}
Suppose that $D_{S}$ is not a $v$-domain. That is, there is a pair $a,b$ of $%
D_{S}$ such that $(((a,b)(a,b)^{-1})$ $D_{S})_{v}\neq D_{S}.$ Since $%
(r,s)^{-1}D_{S}=((r,s)D_{S})^{-1}$ for all $r,s\in D\backslash \{0\},$ we
can take $a,b\in D$ and regard $(a,b)(a,b)^{-1}$ as an ideal of $D.$ Since $%
(((a,b)(a,b)^{-1})$ $D_{S})_{v}\neq D_{S}$, $(a,b)(a,b)^{-1}\cap S=\phi .$
Again since $(((a,b)(a,b)^{-1})$ $D_{S})_{v}\neq D_{S}$ there exist $x,y\in
D_{S}$ such that $((a,b)(a,b)^{-1})$ $D_{S}\subseteq \frac{x}{y}D_{S}$ where 
$x\nmid y$ in $D_{S}$ and we can take $x,y$ in $D.$ This gives $%
y((a,b)(a,b)^{-1})D_{S}\subseteq xD_{S}$ and $y((a,b)(a,b)^{-1})$ $\subseteq
y((a,b)(a,b)^{-1})$ $D_{S}\cap D\subseteq xD_{S}\cap D.$ Now as $%
y((a,b)(a,b)^{-1})$ $\subseteq xD_{S}\cap D$ and $xD_{S}\cap D$ is
divisorial, we have $y((a,b)(a,b)^{-1})_{v}$ $\subseteq xD_{S}\cap D,$ which
forces $yD\subseteq xD_{S}\cap D.$ But then $yD_{S}\subseteq (xD_{S}\cap
D)D_{S}$ $=xD_{S}$ which contradicts the assumption that $x\nmid y$ in $%
D_{S}.$
\end{proof}

Let $X$ be an indeterminate over $D$, let $R=D[X]$ and let $G=\{f\in
D[X]|(A_{f})_{v}=D\}.$ It was shown in \cite[Proposition 3.7]{CDZ} that $G$
is a $t$-complemented $t$-lcm $t$-splitting set of $D[X]$. Here a $t$%
-splitting set $S$ is a $t$-lcm $t$-splitting set if for all $s\in S$ and
for all $x\in D\backslash \{0\},$ $sD\cap xD$ is $t$-invertible. The
following result was proved, as Theorem 3.4 in \cite{CDZ}.

\begin{proposition}
\label{Proposition L1}Let $D$ be an integral domain with quotient field $K$, 
$S$ a $t$-splitting set of $D$, and $\mathcal{S}=\{A_{1}$\textperiodcentered
\textperiodcentered \textperiodcentered $A_{n}|A_{i}=d_{i}DS\cap D$ for some 
$0\neq d_{i}\in D\}$. Then the following statements are equivalent. (1) $S$
is a $t$-lcm $t$-splitting set, (2) every finite type integral $v$-ideal of $%
D$ intersecting $S$ is $t$-invertible and (3) $D_{\mathcal{S}}$ $=\{x\in
K|xC\subseteq D$ for some $C\in T\}$ is a PVMD.
\end{proposition}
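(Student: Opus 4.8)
The plan is to prove the cycle $(2)\Rightarrow(1)\Rightarrow(3)\Rightarrow(2)$, organizing everything around the fact that $t$-invertibility of a finite type $v$-ideal is a $t$-local property and around Griffin's characterization of PVMDs recalled just before Corollary~\ref{Corollary J}.

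First I would record the computation that makes $(2)\Rightarrow(1)$ transparent. For nonzero $a,b\in D$ one has the identity $aD\cap bD=ab\,(a,b)^{-1}$, since $x\in K$ satisfies $abx\in aD\cap bD$ exactly when $xa,xb\in D$; hence $aD\cap bD$ is $t$-invertible if and only if $(a,b)$ is $t$-invertible. Taking $a=s\in S$ and $b=x\in D$, the $t$-lcm condition ``$sD\cap xD$ is $t$-invertible for all $s\in S$, $x\in D$'' is equivalent to ``$(s,x)_{v}$ is $t$-invertible for all $s\in S$, $x\in D$''. Since each $(s,x)_{v}$ is a finite type integral $v$-ideal containing $s\in S$, hence meeting $S$, condition (2) yields (1) at once.

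Next I set up the correspondence between $D$ and $D_{\mathcal{S}}$ carried by the $t$-splitting structure. Because each $A_{i}=d_{i}D_{S}\cap D$ is a $t$-invertible $t$-ideal coprime to $S$, i.e. $(A_{i},s)_{v}=D$ for all $s\in S$ (Lemma~2.1 of \cite{AAZ t-spl}), a maximal $t$-ideal $M$ of $D$ contains a member of $\mathcal{S}$ precisely when $M\cap S=\phi$, and otherwise survives in $D_{\mathcal{S}}$; this gives $D_{\mathcal{S}}=\bigcap\{D_{M}:M\ \text{a maximal }t\text{-ideal},\ M\cap S\neq\phi\}$, with the maximal $t$-ideals of $D_{\mathcal{S}}$ being exactly the $MD_{\mathcal{S}}$ for such $M$ and $(D_{\mathcal{S}})_{MD_{\mathcal{S}}}=D_{M}$. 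Granting this, $(3)\Rightarrow(2)$ is quick: let $I$ be a finite type integral $v$-ideal meeting $S$ and let $M$ be any maximal $t$-ideal. If $I\not\subseteq M$ then $ID_{M}=D_{M}$ is principal; if $I\subseteq M$ then $M\cap S\neq\phi$, so $D_{M}=(D_{\mathcal{S}})_{MD_{\mathcal{S}}}$ is a valuation domain by Griffin's theorem applied to the PVMD $D_{\mathcal{S}}$, whence the finitely generated ideal $ID_{M}$ is again principal. As $t$-invertibility is $t$-local, $I$ is $t$-invertible, which is (2).

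It remains to prove $(1)\Rightarrow(3)$, and this is where the real work lies. By Griffin's characterization it suffices to show $D_{M}$ is a valuation domain for every maximal $t$-ideal $M$ with $M\cap S\neq\phi$. Fixing such an $M$, the reduced form of (1) gives that $(s,x)D_{M}$ is principal, i.e. $s\mid x$ or $x\mid s$ in $D_{M}$, for every $s\in S$ and every $x\in D$. Moreover the $t$-splitting $xD=(A_{x}B_{x})_{t}$ with $A_{x}$ coprime to $S$ forces $A_{x}\not\subseteq M$ (otherwise $(A_{x},s)_{v}\subseteq M_{v}=M$ for $s\in M\cap S$, contradicting $(A_{x},s)_{v}=D$), so $A_{x}$ is a unit in $D_{M}$, $xD_{M}=(B_{x}D_{M})_{t}$, and any $s_{x}\in B_{x,t}\cap S$ lies in $xD_{M}$; thus every $x$ divides some element of $S$ in $D_{M}$. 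The task is to upgrade these comparabilities-with-$S$ into the full divisibility chain on $D_{M}$.

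The hard part will be exactly this upgrade, and I expect it, rather than the structural or Griffin-type arguments, to be the main obstacle. Knowing that each of $a,b$ divides an element of $S$ in $D_{M}$ does not by itself make $a$ and $b$ comparable: applying (1) to $(s_{a},b)$ either gives $s_{a}\mid b$ (whence $a\mid s_{a}\mid b$ and we are done) or $b\mid s_{a}$, leaving the genuinely delicate case in which $a$ and $b$ both divide a common $s=s_{a}\in S$ in $D_{M}$. Closing this gap is where the saturation of $S$ and the full strength of the $t$-lcm hypothesis (applied to pairs $(s',\,\cdot\,)$ with $s'$ ranging over all of $S$), together with the $t$-invertibility and $S$-meeting property of the $B$-parts, must be combined to show that the divisors of $s$ in $D_{M}$ form a chain. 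Once $D_{M}$ is shown to be a valuation domain for every maximal $t$-ideal $M$ meeting $S$, Griffin's characterization identifies $D_{\mathcal{S}}=\bigcap D_{M}$ as a PVMD, giving (3) and closing the cycle.
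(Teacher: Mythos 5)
Your proposal cannot be checked against an internal argument, because the paper offers none: the proposition is imported verbatim as Theorem 3.4 of \cite{CDZ}, so your attempt has to stand entirely on its own. Judged that way, two of your three implications are in reasonable shape. The implication $(2)\Rightarrow(1)$, via the identity $aD\cap bD=ab\,(a,b)^{-1}$, is correct and complete. The outline of $(3)\Rightarrow(2)$ is also sound, but it rests on structural facts that you merely ``grant'': that $D_{\mathcal{S}}=\bigcap\{D_{M}:M\text{ a maximal }t\text{-ideal},\ M\cap S\neq \phi \}$, that each such $MD_{\mathcal{S}}$ is a prime $t$-ideal of $D_{\mathcal{S}}$, and that $(D_{\mathcal{S}})_{MD_{\mathcal{S}}}=D_{M}$. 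Your coprimality observation (a maximal $t$-ideal meeting $S$ contains no member of $\mathcal{S}$) only yields the easy inclusion $D_{\mathcal{S}}\subseteq \bigcap D_{M}$; the reverse inclusion and the localization correspondence are real assertions needing proof (they are provable, using that each $A_{i}=d_{i}D_{S}\cap D$ is a $t$-invertible $t$-ideal with $B_{i,t}=d_{i}A_{i}^{-1}$, but no such proof appears in your text).

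The genuine gap is $(1)\Rightarrow(3)$, and you say so yourself. You reduce it correctly (it suffices that $D_{M}$ be a valuation domain for every maximal $t$-ideal $M$ meeting $S$), and you correctly extract from (1) that every element of $S$ is comparable in $D_{M}$ with every element of $D$, and that every $x\in D$ divides some element of $S$ in $D_{M}$. But then you stop, declaring the upgrade from these comparabilities to comparability of an arbitrary pair $a,b$ dividing a common $s\in S$ to be ``the main obstacle,'' to be closed by some unspecified combination of saturation and the $t$-lcm hypothesis. Naming the obstacle is not overcoming it: nothing you write shows that the divisors of $s$ in $D_{M}$ form a chain, and that is exactly the nontrivial content of the equivalence --- indeed, the natural direct attempts (e.g.\ applying (1) to pairs $(s',x_{i})$, or localizing the $t$-splittings $x_{i}D=(A_{i}B_{i})_{t}$) all stall at precisely this point, which is why the proof in \cite{CDZ} goes through a correspondence between finite type $t$-ideals of $D$ meeting $S$ and those of $D_{\mathcal{S}}$, i.e.\ through the very machinery you assumed without proof in $(3)\Rightarrow(2)$. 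As written, your cycle $(2)\Rightarrow(1)\Rightarrow(3)\Rightarrow(2)$ is broken at $(1)\Rightarrow(3)$, so no implication out of (1) is established and the three conditions are not shown to be equivalent.
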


A $t$-splitting set $S$ is called $t$-complemented if $D_{\mathcal{S}}=D_{T}$
for some multiplicative set $T$ of $D.$

\begin{corollary}
\label{Corollary M}Let $X$ be an indeterminate over $D$, let $R=D[X]$ and
let $G=\{f\in D[X]|(A_{f})_{v}=D\}.$ Then $D$ is a $v$-domain if and only if 
$D[X]_{G}$ is.
\end{corollary}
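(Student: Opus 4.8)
The plan is to prove the two implications separately, using Theorem~\ref{Theorem L} for the forward direction and a contraction-to-$K$ argument for the converse, the whole thing resting on the content description of $G$.

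For the forward implication, suppose $D$ is a $v$-domain. I would first invoke the (classical) fact that $D[X]$ is then a $v$-domain as well, i.e. that the $v$-domain property passes to polynomial extensions. Since $G=\{f\in D[X]\mid (A_{f})_{v}=D\}$ is a $t$-splitting set of $D[X]$ by Proposition 3.7 of \cite{CDZ} (indeed a $t$-complemented $t$-lcm $t$-splitting set), Theorem~\ref{Theorem L} applies verbatim with base domain $D[X]$ and multiplicative set $G$, giving at once that $D[X]_{G}$ is a $v$-domain. This direction is essentially free once one has the polynomial-extension fact in hand, because Theorem~\ref{Theorem L} was built precisely for localizing a $v$-domain at a $t$-splitting set.

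For the converse, assume $D[X]_{G}$ is a $v$-domain and let $A=(a,b)$ be a nonzero finitely generated ideal of $D$; I must show $(AA^{-1})^{-1}=D$. The key preliminary is the contraction identity $D[X]_{G}\cap K=D$. To see it, if $z\in K$ lies in $D[X]_{G}$ then $z=p/q$ with $p\in D[X]$ and $q\in G$, so $zq=p$ forces $zA_{q}=A_{p}\subseteq D$; hence $z\in (A_{q})^{-1}=((A_{q})_{v})^{-1}=D^{-1}=D$, using $(A_{q})_{v}=D$ and the fact that inverses depend only on the $v$-closure. Next, since $D[X]_{G}$ is a flat $D$-algebra (it is a localization of the free $D$-module $D[X]$) and $A$ is finitely generated, inverses commute with the extension: $(AD[X]_{G})^{-1}=A^{-1}D[X]_{G}$, whence $AD[X]_{G}\,(AD[X]_{G})^{-1}=(AA^{-1})D[X]_{G}$. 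As $D[X]_{G}$ is a $v$-domain, this product is $v$-invertible, so $((AA^{-1})D[X]_{G})^{-1}=D[X]_{G}$. Finally, for $z\in K$ one has $z(AA^{-1})\subseteq D$ if and only if $z(AA^{-1})D[X]_{G}\subseteq D[X]_{G}$ (the nontrivial direction uses $D[X]_{G}\cap K=D$ applied to each element $zc$, $c\in AA^{-1}$), so intersecting the previous equality with $K$ yields $(AA^{-1})^{-1}=D[X]_{G}\cap K=D$. Thus $A$ is $v$-invertible and $D$ is a $v$-domain.

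The routine parts are the forward direction (citation of the polynomial-extension fact plus Theorem~\ref{Theorem L}) and the flat base-change identity $(AD[X]_{G})^{-1}=A^{-1}D[X]_{G}$. The main obstacle I expect is organizing the converse so the descent is clean: one must establish $D[X]_{G}\cap K=D$ from the content condition $(A_{f})_{v}=D$ defining $G$, and then argue correctly that the inverse of the \emph{integral} ideal $AA^{-1}$ (which need not be finitely generated) still contracts faithfully from $D[X]_{G}$ to $D$. Getting the quantifiers right in the equivalence $z(AA^{-1})\subseteq D \iff z(AA^{-1})D[X]_{G}\subseteq D[X]_{G}$ is the crux, and it is exactly here that the content characterization of $G$ together with flatness of $D[X]_{G}$ over $D$ carries the argument.
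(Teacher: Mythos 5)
Your proof is correct, and while your forward direction coincides with the paper's, your converse takes a genuinely different route. For the forward implication both you and the paper cite the polynomial-extension fact (\cite[Theorem 4.1]{FZ v}) and then apply Theorem~\ref{Theorem L} to the $t$-splitting set $G$, so there is nothing to compare there. For the converse, the paper stays at the level of rings of quotients: it uses the full strength of \cite[Proposition 3.7]{CDZ} --- that $G$ is a \emph{$t$-complemented $t$-lcm} $t$-splitting set --- to write $D[X]=D[X]_{G}\cap D[X]_{N}$ with $D[X]_{N}$ a PVMD (hence a $v$-domain), concludes from Proposition~\ref{Proposition C} that $D[X]$ is a $v$-domain, and only then descends to $D$ via \cite[Theorem 4.1]{FZ v} again. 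You instead descend directly from $D[X]_{G}$ to $D$ by element-chasing: the contraction identity $D[X]_{G}\cap K=D$, which you correctly derive from the content condition via $zA_{q}=A_{p}\subseteq D$ and $(A_{q})^{-1}=((A_{q})_{v})^{-1}=D^{-1}=D$; the flat base-change formula $(AD[X]_{G})^{-1}=A^{-1}D[X]_{G}$ for finitely generated $A$ (which indeed factors into the standard steps $(AD[X])^{-1}=A^{-1}D[X]$ followed by localization at $G$); and intersection of the resulting equality $((AA^{-1})D[X]_{G})^{-1}=D[X]_{G}$ with $K$. Your route is more elementary and self-contained: it bypasses PVMDs, Proposition~\ref{Proposition L1}, Proposition~\ref{Proposition C}, and the $t$-lcm/$t$-complemented structure entirely, and in fact for this direction it never uses that $G$ is a $t$-splitting set at all, only its defining content description. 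What the paper's route buys is extra information and thematic economy: it shows along the way that $D[X]$ itself is a $v$-domain and keeps the corollary inside the $t$-splitting framework the paper is building, at the cost of heavier cited machinery. Two minor remarks: your restriction to $A=(a,b)$ is harmless (two-generated ideals suffice by the result of \cite{MNZ} quoted in the paper, and your argument anyway works verbatim for arbitrary finitely generated $A$); and in the final step only the trivial direction of your equivalence $z(AA^{-1})\subseteq D\iff z(AA^{-1})D[X]_{G}\subseteq D[X]_{G}$ is actually needed, since $(AA^{-1})^{-1}\subseteq((AA^{-1})D[X]_{G})^{-1}\cap K=D[X]_{G}\cap K=D$ already closes the argument.
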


\begin{proof}
Indeed as $D$ is a $v$-domain, then so is $D[X]$ \cite[Theorem 4.1]{FZ v}.
Since $G$ is a $t$-splitting set, Theorem \ref{Theorem L} applies. For the
converse, note that according to Proposition 3.7 of \cite{CDZ}, $G$ is a $t$%
-complemented $t$-lcm $t$-splitting set of $D[X].$ So, $D[X]_{\mathcal{S}}$
is a PVMD and there is a multiplicative set $N$ of $D[X]$ such that $D[X]_{%
\mathcal{S}}=D[X]_{N}.$ So $D[X]=D[X]_{G}\cap D[X]_{N}$ where $D[X]_{N}$ is
a PVMD. Thus if $D[X]_{G}$ is a $v$-domain, then so is $D[X].$ But then $D$
is a $v$-domain, \cite[Theorem 4.1]{FZ v}.
\end{proof}

Corollary \ref{Corollary M} can be put to an interesting use, but for that
we need some preparation. Let's first note that if $(D,M)$ is a $t$-local
domain and $X$ an indeterminate over $D,$ then $G=\{f\in
D[X]|(A_{f})_{v}=D\} $ is precisely $H=\{f\in D[X]|A_{f}=D\},$ because the
maximal ideal of $D$ is a $t$-ideal. In other words if $D$ is a $t$-local
domain, then $D[X]_{G}=D[X]_{H}=D(X)$, the Nagata extension of $D.$ For
description and properties of $D(X)$ the reader may consult \cite{AAM}.

\begin{corollary}
\label{Corollary N} (to Corollary \ref{Corollary M})Let D be a $t$-local
domain. Then $D$ is a $v$-domain if and only if $D(X)$ is a $v$-domain.
\end{corollary}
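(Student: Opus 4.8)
The plan is to reduce the statement directly to Corollary \ref{Corollary M} by identifying the localization $D[X]_G$ with the Nagata ring $D(X)$ under the $t$-local hypothesis. The key observation, which the text has already isolated in the paragraph immediately preceding the corollary, is that when $(D,M)$ is $t$-local the two multiplicative sets $G=\{f\in D[X]\mid (A_f)_v=D\}$ and $H=\{f\in D[X]\mid A_f=D\}$ coincide. First I would justify this equality: if $A_f=D$ then trivially $(A_f)_v=D$, so $H\subseteq G$; conversely, if $(A_f)_v=D$ but $A_f\neq D$, then $A_f$ is a proper finitely generated ideal, hence contained in some maximal $t$-ideal, but in a $t$-local domain the unique maximal ideal $M$ is the only maximal $t$-ideal and it is a $t$-ideal, so $A_f\subseteq M$ would give $(A_f)_v\subseteq M_v=M\neq D$, a contradiction. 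Thus $G=H$, and by definition $D[X]_G=D[X]_H=D(X)$.

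With this identification in hand, the corollary is immediate: Corollary \ref{Corollary M} states that $D$ is a $v$-domain if and only if $D[X]_G$ is a $v$-domain, and since $D[X]_G=D(X)$ under the $t$-local hypothesis, the equivalence transfers verbatim to read that $D$ is a $v$-domain if and only if $D(X)$ is a $v$-domain. So the entire argument is a substitution of $D(X)$ for $D[X]_G$ in the statement of the parent corollary.

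The only step requiring genuine care is the equality $G=H$, and within that the forward inclusion $G\subseteq H$ is where the $t$-local hypothesis is actually used. I would make sure to invoke that $M$ is a $t$-ideal (which is part of the definition of $t$-local, namely $MD_M$ is a $t$-ideal for the quasi-local $D$, so that $M$ itself is the maximal $t$-ideal) and that every proper finitely generated ideal is contained in a maximal $t$-ideal — a fact recalled in the introduction when discussing the $t$-operation. I do not expect any substantive obstacle here; the content of the corollary is entirely carried by Corollary \ref{Corollary M}, and this result is essentially a specialization obtained by recognizing that the somewhat abstract localization $D[X]_G$ becomes the familiar and well-studied Nagata extension $D(X)$ precisely in the $t$-local setting.
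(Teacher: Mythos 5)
Your overall route is exactly the paper's: the corollary is obtained by identifying $D[X]_{G}$ with $D(X)$ when $D$ is $t$-local and then quoting Corollary \ref{Corollary M} verbatim (the paper states the identification in the paragraph preceding the corollary and offers no further proof). However, your justification of the inclusion $G\subseteq H$ contains a step that fails as written: you invoke $M_{v}=M$, but ``$t$-local'' only gives that $M$ is a $t$-ideal, not that $M$ is divisorial, and these are genuinely different conditions. The paper itself is at pains to keep them apart: a $v$-local $v$-domain is a valuation domain with principal maximal ideal (Proposition \ref{Proposition H}), whereas Nagata's one-dimensional quasi-local domain $\mathcal{N}$ is $t$-local, completely integrally closed (hence a $v$-domain), and not a valuation domain, so by Proposition \ref{Proposition H} its maximal ideal cannot be divisorial. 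Thus $M_{v}=M$ is simply not available under your hypothesis, and an argument relying on it would prove too much.

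The gap is local and easily repaired. Since $A_{f}$ is finitely generated, $(A_{f})_{v}=(A_{f})_{t}$, and the $t$-operation (unlike the $v$-operation) does respect the $t$-ideal $M$: if $A_{f}\subseteq M$, then $(A_{f})_{v}=(A_{f})_{t}\subseteq M_{t}=M\neq D$, the desired contradiction. With that one-line substitution your proof of $G=H$ is correct, and the rest of your argument --- substituting $D(X)=D[X]_{G}$ into Corollary \ref{Corollary M} --- matches what the paper leaves implicit in the phrase ``because the maximal ideal of $D$ is a $t$-ideal.''
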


Next, according to Corollary 8 of \cite{BH}, if $\mathcal{P}$ is an
associated prime of a nonzero polynomial of $D[X],$ then $\mathcal{P\cap }%
D=(0)$ or $\mathcal{P}$\thinspace $=(\mathcal{P\cap }$ $D)[X]$ where $(%
\mathcal{P\cap }$ $D)$ is an associated prime of a principal ideal of $D.$

\begin{corollary}
\label{Corollary P}Let $D$ be an integral domain. Then $D$ is a super $v$%
-domain if and only if $D[X]$ is.
\end{corollary}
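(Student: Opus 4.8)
The plan is to apply the characterization of super $v$-domains from Theorem \ref{Theorem D} to $D[X]$, testing only on associated primes (condition (4)), and to feed in the classification of the associated primes of $D[X]$ recorded just above via Corollary 8 of \cite{BH}. The engine driving the equivalence will be the localization identity $(D[X])_{P[X]}=D_{P}(X)$ together with Corollary \ref{Corollary N}.

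First I would establish that localization identity. For any prime $P$ of $D$, localizing $D[X]$ at $D\backslash P$ produces $D_{P}[X]$, in which the extension of $P[X]$ is $PD_{P}[X]$. Since $PD_{P}$ is the maximal ideal of $D_{P}$, a polynomial $f\in D_{P}[X]$ lies outside $PD_{P}[X]$ exactly when $A_{f}=D_{P}$; hence $(D[X])_{P[X]}=(D_{P}[X])_{H}=D_{P}(X)$, the Nagata extension, with $H=\{f\mid A_{f}=D_{P}\}$. Moreover, whenever $P$ is an associated prime of a principal ideal of $D$, the ring $D_{P}$ is $t$-local (as noted before Example \ref{Example G}), so Corollary \ref{Corollary N} applies and tells us that $D_{P}$ is a $v$-domain if and only if $D_{P}(X)=(D[X])_{P[X]}$ is.

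For the forward implication, assume $D$ is a super $v$-domain. By Theorem \ref{Theorem D} it suffices to show that $(D[X])_{\mathcal{P}}$ is a $v$-domain for every associated prime $\mathcal{P}$ of a nonzero polynomial of $D[X]$. By Corollary 8 of \cite{BH} there are two cases. If $\mathcal{P}\cap D=(0)$, then localizing at $\mathcal{P}$ already inverts every nonzero element of $D$, so $(D[X])_{\mathcal{P}}$ is a localization of the PID $K[X]$ at a prime, hence a field or a DVR, and therefore a $v$-domain independently of $D$. If $\mathcal{P}\cap D\neq (0)$, then $\mathcal{P}=P[X]$ with $P=\mathcal{P}\cap D$ an associated prime of a principal ideal of $D$; since $D$ is a super $v$-domain, $D_{P}$ is a $v$-domain by Theorem \ref{Theorem D}, and as $D_{P}$ is $t$-local, Corollary \ref{Corollary N} gives that $D_{P}(X)=(D[X])_{P[X]}$ is a $v$-domain. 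In either case $(D[X])_{\mathcal{P}}$ is a $v$-domain, so $D[X]$ is a super $v$-domain. For the converse, assume $D[X]$ is a super $v$-domain. By Theorem \ref{Theorem D} it suffices to check that $D_{P}$ is a $v$-domain for every associated prime $P$ of a principal ideal of $D$. For such a $P$ the ideal $P[X]$ is a prime of $D[X]$, so $(D[X])_{P[X]}$ is a $v$-domain because $D[X]$ is a super $v$-domain; since $(D[X])_{P[X]}=D_{P}(X)$ and $D_{P}$ is $t$-local, Corollary \ref{Corollary N} forces $D_{P}$ to be a $v$-domain. Hence $D$ is a super $v$-domain.

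The step I expect to be the main obstacle is pinning down the identity $(D[X])_{P[X]}=D_{P}(X)$ cleanly, i.e.\ the interplay between inverting $D\backslash P$ and the content condition $A_{f}=D_{P}$, and, in the forward direction, correctly isolating the upper-to-zero case $\mathcal{P}\cap D=(0)$ and observing that it is automatically a $v$-domain. Everything else is a direct assembly of Theorem \ref{Theorem D}, Corollary \ref{Corollary N}, and the associated-prime classification from \cite{BH}.
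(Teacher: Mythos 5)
Your proof is correct and follows essentially the same route as the paper's: Theorem \ref{Theorem D} reduces everything to (associated) primes, the classification from Corollary 8 of \cite{BH} splits into the upper-to-zero case (a DVR, hence a $v$-domain) and the case $\wp =P[X]$, where the identity $D[X]_{P[X]}=D_{P}(X)$ together with Corollary \ref{Corollary N} and the $t$-locality of $D_{P}$ transfers the $v$-domain property back and forth. The only (harmless) deviations are that you verify the localization identity $D[X]_{P[X]}=D_{P}(X)$ explicitly, and that in the converse you invoke condition (3) of Theorem \ref{Theorem D} at the prime $P[X]$, whereas the paper checks that $P[X]$ is itself an associated prime of a principal ideal of $D[X]$ (minimal over $aD[X]:bD[X]$) so as to use condition (4).
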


\begin{proof}
Let $D$ be a super $v$-domain. To see that $D[X]$ is a super $v$-domain let $%
\wp $ be an associated prime of $D[X]$. Then $\wp $ is an upper to $0,$
i.e., $\wp \cap D=(0)$ or $\wp =P[X]$ where $P$ is an associated prime of a
principal ideal of $D.$ If $\wp $ is an upper to $0$ then $D[X]_{\wp }$ is a
rank one DVR and so a $v$-domain. If, on the other hand, $\wp =P[X],$ where $%
P$ is an associated prime of a principal ideal of $D,$ then $D[X]_{\wp
}=D[X]_{P[X]}=D_{P}(X).$ Since $D$ is a a super $v$-domain, $D_{P}$ is a $v$%
-domain. But, then so is $D_{P}(X),$ by Corollary \ref{Corollary N}; because 
$D_{P}$ is $t$-local \cite[Corollary 2.3]{FZ t}. That $D[X]$ is a super $v$%
-domain, now follows from Theorem \ref{Theorem D}. For the converse note
that if $P$ is a minimal prime of $(a):(b)$ then $P[X]$ is minimal over $%
aD[X]:bD[X],$ making $P[X]$ an associated prime of a principal ideal of $%
D[X].$ Since $D[X]$ is a super $v$-domain, $D[X]_{P[X]}=D_{P}(X)$ is a $v$%
-domain. Now as $D_{P}$ is $t$-local, Corollary \ref{Corollary N} applies to
give the conclusion that $D_{P}$ is a a $v$-domain. Now $P$ being any
associated prime of $D$ we conclude, by Theorem \ref{Theorem D}, that $D$ is
indeed a super $v$-domain.
\end{proof}


\begin{thebibliography}{99}
\bibitem{AAM} \bigskip Anderson, D.D., Anderson, D.F. and Markanda, R.: The
rings $R(X)$ and $R<X>,$ J. Algebra 95 (1985) , 96-115.

\bibitem{AAFZ v} \bigskip Anderson, D.D., Anderson, D.F., Fontana, M. and
Zafrullah, M.: On $v$-domains and star operations, Comm. Algebra, 37 (2009)
3018--3043.

\bibitem{AAZ spl} \bigskip Anderson, D.D., Anderson, D.F. and Zafrullah, M.:
Splitting the $t$-class group,\ J. Pure Appl. Algebra 74(1991) 17-37.

\bibitem{AAZ t-spl} \bigskip Anderson, D.D., Anderson, D.F. and Zafrullah,
M.: The ring $D+XDS[X]$ and $t$-splitting sets, Commutative Algebra Arabian
J. Sci. Eng. Sect. C Theme Issues 26 (1) (2001) 3--16.

\bibitem{BH} Brewer, J. and Heinzer, W.: Associated primes of principal
ideals,\ Duke Math. J. 41(1974) 1-7.

\bibitem{CDZ} Chang, G.W., Dumitrescu, T. and Zafrullah, M.: $t$-Splitting
sets in integral domains, J. Pure Appl. Algebra 187 (2004) 71--86.

\bibitem{CMZ 1} Costa, D.L., Mott, J.L. and Zafrullah, M.: The construction $%
D+XD_{S}[X]$,\ J. Algebra 53(1978) 423-439.

\bibitem{CMZ 2} Costa, D.L., Mott, J.L. and Zafrullah, M.: Overrings and
dimensions of general $D+M$ constructions,\ J. Natur. Sci. and Math. 26 (2)
(1986), 7-14.

\bibitem{FZ v} Fontana, M. and Zafrullah, M.: On $v$-domains: a survey. In:
Fontana, M., Kabbaj, S., Olberding, B., Swanson, I. (eds.) Commutative
Algebra: Noetherian and Non-Noetherian Perspectives, pp. 145--180. Springer,
New York (2011)

\bibitem{FZ t} Fontana, M. and Zafrullah, M.: On $t$-local domains and
valuation domains, in "Advances in Commutative Algebra" Editors: Badawi,
Ayman, Coykendall, Jim, Trends in Mathematics, Birkh\"{a}user 2019, pp.
33-62.

\bibitem{Gab} Gabelli, S.: On divisorial ideals in polynomial rings over
Mori domains,\ Comm. Algebra 15(11)(1987) 2349-2370.

\bibitem{Gil} Gilmer, R.: Multiplicative Ideal Theory, Marcel-Dekker, New
York, 1972.

\bibitem{Gri} Griffin, M.: Some results on $v$-multiplication rings,\ Canad.
J. Math.19(1967) 710-722.

\bibitem{HH} Hedstrom, J. and Houston, E.: Pseudo-valuation domains, Pacific
J. Math. 75 (1978), 137--147.

\bibitem{He} Heinzer, W.: An essential integral domain with a nonessential
localization, Can. J. Math. 33, 400--403 (1981).

\bibitem{MNZ} Mott, J., Nashier, B., and Zafrullah, M.: Contents of
polynomials and invertibility. Comm. Algebra18 (1990), 1569--1583.

\bibitem{MZ} Mott, J. and Zafrullah, M.: On Pr\"{u}fer $v$-multiplication
domains,\ Manuscripta Math. 35(1981)1-26.

\bibitem{Nag 1} Nagata, M.: On Krull's conjecture concerning valuation
rings,\ Nagoya Math. J. (4)(1952) 29-33.

\bibitem{Nag 2} Nagata, M.: Correction to my paper \textquotedblright On
Krull's conjecture concerning valuation overrings,\ Nagoya Math.J. (9)(1955)
209-212.
\end{thebibliography}
\end{document}